\newtheorem{thm}{Theorem}[section]
\newtheorem{conj}[thm]{Conjecture}
\newtheorem{lem}[thm]{Lemma}
\newtheorem{cor}[thm]{Corollary}
\newtheorem{pro}[thm]{Proposition}
\newenvironment {proof} {\noindent{\em Proof.}}{\hspace*{\fill}$\Box$\par\vspace{4mm}}
\newcommand{\ml}{l\kern-0.55mm\char39\kern-0.3mm}
\def\qed{\hfill \nopagebreak\rule{5pt}{8pt}}
\title{\textbf{Bounds for the rainbow disconnection number of graphs\footnote{Supported by NSFC No.11871034 and 11531011.}}}
\author{{\small Xuqing Bai, Zhong Huang, Xueliang Li } \\
{\small  Center for Combinatorics and LPMC}\\
{\small Nankai University, Tianjin 300071, China}\\
{\small Email: baixuqing0@163.com, 2120150001@mail.nankai.edu.cn, lxl@nankai.edu.cn}\\
}
\date{}
\begin{document}
\maketitle
\begin{abstract}
An edge-cut $R$ of an edge-colored connected graph is called a rainbow-cut if no two edges in the edge-cut are colored the same. An edge-colored graph is rainbow disconnected if for any two distinct vertices $u$ and $v$ of the graph, there exists a $u$-$v$-rainbow-cut separating them. For a connected graph $G$, the rainbow
disconnection number of $G$, denoted by rd$(G)$, is defined as the
smallest number of colors that are needed in order to make $G$
rainbow disconnected.

In this paper, we first give some tight upper bounds for rd$(G)$, and moreover, we completely characterize the graphs which meet the upper bound of the  Nordhaus-Gaddum type results obtained early by us. Secondly, we propose  a conjecture that $\lambda^+(G)\leq \textnormal{rd}(G)\leq \lambda^+(G)+1$, where $\lambda^+(G)$ is the upper edge-connectivity, and prove the conjecture for many classes of graphs, to support it. Finally, we give the relationship between rd$(G)$ of a graph $G$ and the rainbow vertex-disconnection number rvd$(L(G))$ of the line graph $L(G)$ of $G$.

\noindent\textbf{Keywords:} edge-coloring, edge-connectivity,
rainbow disconnection coloring (number), line graph

\noindent\textbf{AMS subject classification 2010:} 05C15, 05C40.
\end{abstract}

\section{Introduction}

All graphs considered in this paper are finite and
undirected, and all graphs are simple unless emphasized. Let $G=(V(G), E(G))$ be a nontrivial connected graph with vertex-set $V(G)$ and edge-set $E(G)$. For $v\in V(G)$, let $d_G(v)$ and $N_G(v) \ (N_G[v])$  denote the $degree$ and the \emph{open (closed) neighborhood} of $v$ in $G$ (or simply $d(v)$ and $N(v) \ (N[v])$ respectively, when the graph $G$ is clear from the context). We use $\delta(G)$ and $\Delta(G)$ to denote the minimum and maximum degree of $G$, respectively. The notion $G[S]$ denotes the induced subgraph of $G$ by vertex-set $S$.
For any notation or terminology not defined here, we follow those used in \cite{BM}.

Let $G$ be a graph with an \emph{edge-coloring} $c$: $E(G)\rightarrow [k]$, $k \in \mathbb{N}$, where adjacent edges may be colored the same. When adjacent edges of $G$ receive different colors by $c$, the edge-coloring $c$ is called \emph{proper}. The \emph{chromatic index} of $G$, denoted by $\chi'(G)$, is the minimum number of colors needed in a proper edge-coloring of $G$. By a famous theorem of Vizing \cite{V}, one has that
$$\Delta(G) \leq \chi'(G) \leq \Delta(G)+1$$
for every nonempty graph $G$. If $\chi'(G) = \Delta(G)$, then $G$ is said to be in \emph{Class} $1$; if $\chi'(G) = \Delta(G) + 1$, then $G$ is said to be in \emph{Class} $2$.

A path is called \emph{rainbow} if no two edges of the path are colored the same. An edge-colored graph is called \emph{rainbow~connected} if any two distinct vertices of the graph are connected by a rainbow path in the graph. An edge-coloring under which a graph is rainbow connected is called a \emph{rainbow~connection~coloring} of the graph. Clearly, if a graph is rainbow connected, it must be connected. For a connected graph $G$, the \emph{rainbow connection number} of $G$, denoted by rc$(G)$, is the smallest number of colors that are needed in order to make $G$ rainbow connected. The concept of rainbow connection was introduced by Chartrand et al. \cite{CJMZ} in $2008$. For more details on the rainbow connections, we refer the reader to a book \cite{LS1} and two survey papers \cite{LSS, LS2}.

In this paper, we investigate a new concept introduced by Chartrand et al. in \cite{CDHHZ} that is somehow reverse to the rainbow connection.

An \emph{edge-cut} of a connected graph $G$ is a set $F$ of edges such that $G-F$ is disconnected. The minimum number of edges in an edge-cut of $G$ is the \emph{edge-connectivity} of $G$, denoted by $\lambda(G)$. We have the well-known inequality $\lambda(G)\leq \delta(G)$. For two vertices $u$ and $v$ of $G$, let $\lambda_G(u,v)$ (or simply $\lambda(u,v)$ when the graph $G$ is clear from the context), denote the minimum number of edges in an edge-cut $F$ such that $u$ and $v$ lie in different components of $G-F$. A \emph{u-v-path} is a path with ends $u$ and $v$. The following proposition presents an alternate interpretation of $\lambda(u,v)$ (see \cite{EFS, FF}).

\begin{pro} \cite{EFS, FF}
\emph{For every two vertices $u$ and $v$ in a graph $G$, \emph{$\lambda(u,v)$} is equal to the maximum number of pairwise
edge-disjoint $u$-$v$-paths in $G$}.
\end{pro}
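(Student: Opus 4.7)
The plan is to prove the proposition in two steps, matching the two inequalities. Let $k$ denote the maximum number of pairwise edge-disjoint $u$-$v$-paths in $G$ and let $\ell = \lambda(u,v)$.

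For the easy inequality $k \leq \ell$, I would start from any collection $P_1, \ldots, P_k$ of pairwise edge-disjoint $u$-$v$-paths and any $u$-$v$-edge-cut $F$. Since $u$ and $v$ lie in different components of $G-F$, each path $P_i$ must use at least one edge of $F$. As the $P_i$ are edge-disjoint, these edges are distinct, giving $|F| \geq k$. Taking $F$ to be a minimum $u$-$v$-edge-cut yields $k \leq \ell$.

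For the reverse inequality $\ell \leq k$, my plan is to model $G$ as a unit-capacity network $N$: replace each edge $xy \in E(G)$ with two oppositely directed arcs $(x,y)$ and $(y,x)$, each of capacity $1$, and take $u$ as source and $v$ as sink. By the integral max-flow min-cut theorem, $N$ admits an integer-valued maximum $u$-$v$-flow $f$ whose value equals the minimum capacity of a $u$-$v$-cut in $N$, and this capacity equals $\ell$ (every edge-cut of $G$ separating $u$ from $v$ yields an arc-cut of the same size in $N$, and conversely). A standard flow-decomposition argument then expresses $f$ as a sum of unit flows along $\ell$ internally arc-disjoint $u$-$v$-dipaths in $N$ (discarding any cycle contributions), from which $\ell$ pairwise edge-disjoint $u$-$v$-paths in $G$ are obtained by projecting arcs back to their underlying edges; a brief check rules out an underlying edge being used by two paths in opposite directions, since such cancellations are removed during decomposition.

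The main obstacle is precisely this reverse inequality: one must certify the existence of $\ell$ edge-disjoint paths given only the combinatorial minimum-cut bound. Invoking max-flow min-cut shifts the difficulty to an off-the-shelf theorem, which is the cleanest route; alternatively, a direct induction on $|E(G)|$, contracting an edge $e$ not in every minimum $u$-$v$-cut and lifting paths from $G/e$, can be carried out, but this requires careful case analysis at edges incident with $u$ or $v$ to guarantee the lifted paths remain edge-disjoint. I would present the max-flow argument for compactness and defer to \cite{EFS, FF} for the original combinatorial derivations.
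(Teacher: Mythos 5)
The paper offers no proof of this proposition: it is quoted as a classical result of Elias--Feinstein--Shannon and Ford--Fulkerson (the edge version of Menger's theorem, equivalently max-flow min-cut with unit capacities), and the citation is the whole of the paper's justification. So there is nothing in the paper to compare your argument against line by line; what can be said is that your proposal is the standard proof and is essentially correct. The easy direction is exactly right: each of $k$ edge-disjoint $u$-$v$-paths must use a distinct edge of any separating set $F$, so $|F|\geq k$. For the reverse direction, the reduction to a unit-capacity digraph with antiparallel arcs, the identification of minimum arc-cut capacity with $\lambda(u,v)$, and the integral flow decomposition are all sound. Two small points deserve one more sentence each if you write this out in full: first, the cancellation of flow on antiparallel arc pairs should be performed \emph{before} decomposition (if both arcs of a pair carry a unit of flow, zero both out; this preserves conservation and the flow value), after which the projection to $G$ is automatically edge-disjoint; second, a dipath in the network projects only to a $u$-$v$-\emph{walk} in $G$, so you should note that each such walk contains a $u$-$v$-path and that extracting paths from pairwise edge-disjoint walks keeps them pairwise edge-disjoint. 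Neither point is a gap, just bookkeeping. Your remark that an inductive edge-contraction proof is also possible is accurate, and deferring to the cited sources, as the paper itself does, is entirely appropriate here.
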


An edge-cut $R$ of an edge-colored connected graph $G$ is called a \emph{rainbow-cut} if no two edges in $R$ are colored the same. A rainbow-cut $R$ of $G$ is said to \emph{separate two distinct
vertices $u$ and $v$} of $G$ if $u$ and $v$ belong to different components of $G-R$. Such a rainbow-cut is called a $u$-$v$-\emph{rainbow-cut}. An edge-colored graph $G$ is called \emph{rainbow~disconnected} if for every two vertices $u$ and $v$ of $G$, there exists a $u$-$v$-rainbow-cut in $G$ separating them. In this case, the edge-coloring is called a \emph{rainbow disconnection coloring} of $G$. For a connected graph $G$, we similarly define
the \emph{rainbow disconnection number} (or rd-\emph{number} for short)
of $G$, denoted by rd$(G)$, as the smallest number of colors that are
needed in order to make $G$ rainbow disconnected. A rainbow disconnection coloring with rd$(G)$ colors is called an rd-\emph{coloring} of $G$.

In \cite{BCLLW}, we introduce the concept of rainbow vertex-disconnection number. For a connected and vertex-colored graph $G$, let $x$ and $y$ be two vertices of $G$. If $x$ and $y$ are nonadjacent, then an $x$-$y$-\emph{vertex-cut} is a subset $S$ of $V(G)$ such that $x$ and $y$ belong to different components of $G-S$. If $x$ and $y$ are adjacent, then an $x$-$y$-\emph{vertex-cut} is a subset $S$ of $V(G)$ such that $x$ and $y$ belong to different components of $(G-xy)-S$. A vertex subset $S$ of $G$ is \emph{rainbow} if no two vertices of $S$ have the same color.
An $x$-$y$-\emph{rainbow-vertex-cut} is an $x$-$y$-vertex-cut $S$ such that if $x$ and $y$ are nonadjacent, then $S$ is rainbow; if $x$ and $y$ are adjacent, then $S+x$ or $S+y$ is rainbow.

A vertex-colored graph $G$ is called \emph{rainbow vertex-disconnected} if for any two vertices $x$ and $y$ of $G$, there exists an $x$-$y$-rainbow-vertex-cut. In this case, the vertex-coloring $c$ is called a \emph{rainbow vertex-disconnection coloring} of $G$. For a connected graph $G$, the \emph{rainbow vertex-disconnection number}
of $G$, denoted by rvd$(G)$, is the minimum number of colors that are needed to make $G$ rainbow vertex-disconnected. A rainbow vertex-disconnection coloring with rvd$(G)$ colors is called an
rvd-\emph{coloring} of $G$.

This paper is organized as follows. In Section $2$, we obtain some upper bounds for rd$(G)$, and moreover, we completely characterize the graphs which meet
the upper bound of the Nordhaus-Gaddum type results obtained early by us. In Section $3$, we propose a conjecture that rd$(G)\leq \lambda^+(G)+1$ and prove it for many classes of graphs, to support it, and moreover, we give a sufficient and necessary condition for a $k$-edge-connected $k$-regular graph $G$ ($k$ is odd) to have rd$(G)=k$. Finally, we give the relationship between rd$(G)$ of $G$ and rvd$(L(G))$ of the line graph $L(G)$ of $G$.

\section{Some upper bounds for rd$(G)$}

In this section, we obtain some upper bounds for the rainbow disconnection number. Let $G$ be a graph and $X$ a proper subset of $V(G)$. To \emph{shrink} $X$ is to delete all the edges between vertices of $X$ and then identify the vertices of $X$ into a single vertex. We denote the resulting graph by $G/X$.
For each vertex $x$ of $G$, let $E_x$ be all edges which are incident with $x$ in $G$.
Now we give some upper bounds for rd$(G)$ in terms of the upper edge-connectivity. First, we give some useful lemmas and introduce a shrinking operation.

\begin{lem}\upshape\cite{CDHHZ}\label{rd-lem1}
If $G$ is a nontrivial connected graph, then
$$\lambda(G) \leq \lambda^+(G)\leq \textnormal{rd}(G)\leq \chi'(G) \leq \Delta(G)+1,$$
where the upper edge-connectivity $\lambda^+(G)$ is defined
by $\lambda^+(G) = \max\{\lambda(u,v): u, v\in V(G)\}.$
\end{lem}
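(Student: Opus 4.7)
The plan is to establish the four inequalities in the chain one at a time, since each is essentially independent and short.

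For $\lambda(G)\le\lambda^+(G)$, I would invoke the Menger-type characterization $\lambda(G)=\min\{\lambda(u,v):u,v\in V(G),\,u\ne v\}$, which is standard (and consistent with the local/global edge-connectivity theorem referenced in the paper). Since $\lambda^+(G)$ is the maximum of the same quantity, the inequality is immediate: a minimum cannot exceed a maximum over the same index set.

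For the central inequality $\lambda^+(G)\le\textnormal{rd}(G)$, I would pick a pair $u,v$ attaining $\lambda(u,v)=\lambda^+(G)$, fix any rainbow disconnection coloring $c$ of $G$, and take a $u$-$v$-rainbow-cut $R$ provided by the definition of rd$(G)$. Since $R$ is in particular a $u$-$v$-edge-cut, $|R|\ge\lambda(u,v)=\lambda^+(G)$, and since the edges of $R$ are pairwise distinctly colored, $c$ must use at least $|R|\ge\lambda^+(G)$ colors; minimising over rd-colorings gives the bound. For $\textnormal{rd}(G)\le\chi'(G)$, the idea is to exhibit a proper edge-coloring as a rainbow disconnection coloring. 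Given a proper edge-coloring with $\chi'(G)$ colors, for any two vertices $u,v$ the set $E_u$ of edges incident with $u$ (defined in the paragraph preceding the lemma) is a $u$-$v$-edge-cut separating $u$ from the rest of $G$, and properness forces the edges of $E_u$ to bear pairwise distinct colors, so $E_u$ is a rainbow-cut. Hence the proper edge-coloring is already an rd-coloring, yielding $\textnormal{rd}(G)\le\chi'(G)$. Finally, $\chi'(G)\le\Delta(G)+1$ is Vizing's theorem, already cited as \cite{V} in the introduction.

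None of the four steps is really an obstacle; the only subtlety worth flagging is the standard fact that the global edge-connectivity equals the minimum local edge-connectivity, which justifies the first inequality. All other steps rely only on the definitions of rd$(G)$, $\lambda^+(G)$, $\chi'(G)$, and Vizing's theorem. I would write the proof as four short paragraphs in this order, with the $E_u$ construction being the only piece that merits a sentence of explanation.
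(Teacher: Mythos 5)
Your proof is correct; the paper does not prove this lemma itself but cites it from \cite{CDHHZ}, and your four-step argument (minimum versus maximum of the local edge-connectivity, the cardinality lower bound from a rainbow cut at a pair attaining $\lambda^+(G)$, the trivial cut $E_u$ under a proper edge-coloring, and Vizing's theorem) is exactly the standard argument from that source. No gaps.
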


\begin{lem}{\upshape\cite{CDHHZ}}\label{P}
\textnormal{(i)} If $G$ is a Petersen graph, then $\textnormal{rd}(G)$ is 4.

\textnormal{(ii)} If $W_n=C_{n-1}\vee K_1$ is the wheel of order $n\geq 4$, then $\textnormal{rd}(W_n)=3$.
\end{lem}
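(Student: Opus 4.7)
The two parts require rather different ideas, so I handle them separately.

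For part (i), first sandwich $\textnormal{rd}(P)$ between $3$ and $4$ using Lemma \ref{rd-lem1}: the upper bound is the classical fact $\chi'(P)=4$ (Petersen is Class $2$), and the lower bound $\lambda^+(P)=3$ follows since $P$ is $3$-regular and $3$-edge-connected. The core task is to rule out $\textnormal{rd}(P)=3$. Assuming a $3$-coloring $c$ realizes this, every pair $u,v$ satisfies $\lambda(u,v)=3$, so each separating rainbow cut has exactly $3$ edges and uses all three colors. I therefore first classify the $3$-edge-cuts of $P$: if $(A,B)$ is such a partition with $|A|=a$, then $3$-regularity gives $3a = 2e(A)+3$, so $a$ is odd. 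The cases $a=3$ (which would force a triangle in $A$) and $a=5$ (which would force a chord in a $5$-cycle inside $A$, hence a triangle or $4$-cycle) are both excluded by girth $5$. Only $a=1$ survives, so the $3$-edge-cuts of $P$ are exactly the vertex stars $E_v$.

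Consequently the separating rainbow cut for $\{u,v\}$ must be $E_u$ or $E_v$, so at most one vertex has a non-rainbow star. If all ten stars are rainbow, $c$ is a proper $3$-edge-coloring, contradicting $\chi'(P)=4$. Otherwise exactly one vertex $w$ has $E_w$ non-rainbow, and after renaming colors, color $1$ appears on at least two edges at $w$. Writing $G_i$ for the color-$i$ subgraph, rainbow stars at all $v\neq w$ give $\deg_{G_i}(v)\le 1$ for $i=2,3$, so $G_2$ and $G_3$ are matchings with $|G_2|,|G_3|\le 5$; also $\deg_{G_1}(w)\in\{2,3\}$ while $\deg_{G_1}(v)\le 1$ for $v\neq w$. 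A brief case split on $\deg_{G_1}(w)$, combined with $|E(P)|=15$, forces $G_2$ or $G_3$ to be a perfect matching, which then contradicts the degree of $w$ in that color class. Hence $\textnormal{rd}(P)=4$.

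For part (ii), $\lambda^+(W_n)=3$ (rim vertices have degree $3$ and $W_n$ is $3$-edge-connected), giving $\textnormal{rd}(W_n)\ge 3$. For the upper bound I exhibit a $3$-coloring in which every rim vertex has a rainbow star; this suffices because, among any two vertices of $W_n$, at least one is a rim vertex, and its star is then a rainbow $3$-edge-cut separating it from everything else. Concretely, properly edge-color the rim cycle $C_{n-1}$ on palette $\{1,2,3\}$ --- two colors when $n-1$ is even, three colors in a pattern such as $1,2,1,2,\ldots,1,2,3$ when $n-1$ is odd --- and then set $c(hv_i)$ to be the unique color of $\{1,2,3\}$ missing from the two rim edges at $v_i$. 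Exactly $3$ colors are used, and every rim star is rainbow by construction, so $\textnormal{rd}(W_n)\le 3$.

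The main obstacle is clearly the Petersen part: first the classification of $3$-edge-cuts via girth $5$, and then the color-class counting that rules out the single-bad-star scenario. Part (ii) becomes essentially routine once the local reduction ``rainbow star at every rim vertex is enough'' is identified.
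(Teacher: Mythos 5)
Your proof is essentially correct, but note that the paper itself offers no proof of this lemma: it is imported verbatim from \cite{CDHHZ}, so there is no in-paper argument to compare against. What you have written is a valid self-contained derivation. For the Petersen graph, the sandwich $3=\lambda^+(P)\le \textnormal{rd}(P)\le \chi'(P)=4$, the classification of $3$-edge-cuts as vertex stars via the parity identity $3a=2e(A)+3$ and the girth-$5$ exclusion of $a=3,5$, and the conclusion that at most one star can fail to be rainbow are all sound; the final step is cleanest if phrased as a pure parity count (each $v\ne w$ has degree exactly $1$ in every colour class $G_i$, so $|E(G_i)|=\tfrac12(9+\deg_{G_i}(w))$ forces every $\deg_{G_i}(w)$ to be odd, hence all equal to $1$, contradicting $\deg_{G_1}(w)\ge 2$) --- your ``perfect matching'' packaging is a slightly roundabout but equivalent way of reaching the same contradiction. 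The wheel argument (rainbow stars at all rim vertices suffice because every pair of vertices contains a rim vertex, plus an explicit proper $3$-edge-colouring of the rim with spokes receiving the missing colour) is the standard construction and is correct, including the boundary case $W_4=K_4$.
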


\begin{lem}{\upshape\cite{BCHL}} \label{Class}
For a graph $G$, the following results hold.

\textnormal{(i)} For any vertex $u$ of $G$, let $H = G-u$. Then $\textnormal{rd}(G)\leq \Delta(H)+1$.

\textnormal{(ii)} If there exists a vertex $u$ of $G$ such that $H=G-u$ is in Class 1 and $d_H(x)\leq \Delta(H)-1$ for any $x \in N_G(u)$, then $\textnormal{rd}(G)\leq \Delta(H)$.
\end{lem}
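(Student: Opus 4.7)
The plan is to construct, in each case, a rainbow-disconnection coloring of $G$ by extending a suitable proper edge-coloring of $H = G - u$ to the edges incident with $u$. The driving observation is that for any two distinct vertices $v,w \in V(G)$, at least one of them, say $w$, lies in $V(H)$, and the edge-cut $E_w$ (all edges of $G$ incident with $w$) already separates $v$ from $w$. Hence it suffices to produce a coloring in which $E_w$ is rainbow for every $w \in V(H)$; in particular, the edges incident with $u$ do \emph{not} need to receive pairwise distinct colors, and no condition need be imposed on $E_u$ itself.

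For part (i), I would start from a proper $(\Delta(H)+1)$-edge-coloring $c$ of $H$, which exists by Vizing's theorem. At each neighbor $x \in N_G(u)$, $c$ uses at most $d_H(x) \le \Delta(H)$ colors on the $H$-edges at $x$, so at least one color in $[\Delta(H)+1]$ is free at $x$. For each edge $ux$, independently assign any such free color as $c(ux)$. Then for every $w \in V(H)$, the edges of $E_w$ consist of rainbow-colored $H$-edges at $w$ (by properness of $c$) together with possibly $uw$, whose color was chosen to differ from those at $w$ in $H$; hence $E_w$ is rainbow, and the coloring uses at most $\Delta(H)+1$ colors.

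For part (ii), replace Vizing by the Class $1$ hypothesis to obtain a proper $\Delta(H)$-edge-coloring $c$ of $H$. The stronger hypothesis $d_H(x) \le \Delta(H)-1$ for every $x \in N_G(u)$ again leaves at least one color in $[\Delta(H)]$ free at each such $x$; I would color the edges $ux$ exactly as in part (i), yielding $\textnormal{rd}(G) \le \Delta(H)$ by the same verification.

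The main obstacle, such as it is, is conceptual rather than combinatorial: recognizing that one-sided rainbowness of $E_w$ for all $w \neq u$ already furnishes every required separating cut, so no conflict among the edges at $u$ has to be resolved. Once this is seen, both bounds reduce to a single-endpoint count of available colors and a greedy one-by-one assignment, bypassing any Hall-type selection.
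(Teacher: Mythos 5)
Your proof is correct and follows essentially the same route as the cited source: the paper's Remark~1 records that the proof of Lemma~\ref{Class}(i) yields a rainbow disconnection coloring in which every vertex except $u$ is \emph{proper}, which is exactly the coloring you construct (a proper edge-coloring of $H$ extended by a free color at each neighbor of $u$, with $E_w$ serving as the rainbow cut), and the same device appears verbatim in the proof of Theorem~\ref{good bound}. No gaps; the key observation that the edges at $u$ need not be mutually distinct in color is precisely the point.
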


\noindent\textbf{Remark 1.}
From the proof of Lemma \ref{Class} (i), we know that there exists a rainbow disconnection coloring of $G$ using colors from [$\Delta(H)$+1] such that each vertex is proper except vertex $u$.

\begin{lem}\upshape{\cite{S}}\label{rd-2-multi}
Let $G$ be a loopless multigraph with maximum degree $\Delta(G)$. Then $\chi'(G)\leq \lfloor\frac{3}{2}\Delta(G)\rfloor$.
\end{lem}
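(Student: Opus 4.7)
The plan is to prove Shannon's classical edge-coloring theorem by induction on $|E(G)|$, using a Vizing-style fan-and-Kempe-chain argument while tracking carefully why $\lfloor \tfrac{3}{2}\Delta(G)\rfloor$ (rather than the simple-graph bound $\Delta+1$, which fails for multigraphs) is the correct threshold.

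Set $k := \lfloor \tfrac{3}{2}\Delta(G)\rfloor$; the base case of a graph with no edges is trivial. For the inductive step I would pick an edge $e=uv$, apply the inductive hypothesis to $G-e$ (whose maximum degree is at most $\Delta(G)$) to obtain a proper $k$-edge-coloring, and attempt to extend it to $e$. Let $M(x)\subseteq[k]$ denote the set of colors missing at $x$ in $G-e$; since $d_{G-e}(u),d_{G-e}(v)\leq \Delta(G)-1$, both $|M(u)|$ and $|M(v)|$ are at least $k-\Delta(G)+1\geq 1$. If $M(u)\cap M(v)\neq\emptyset$, I color $e$ with any common missing color and am done.

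The substantive case is $M(u)\cap M(v)=\emptyset$. Fix $\beta\in M(u)$ and $\alpha\in M(v)$; since $\alpha\notin M(u)$ some edge $uv_1$ is colored $\alpha$, and since $\beta\notin M(v)$ some edge $vw_1$ is colored $\beta$. Consider the subgraph $H_{\alpha\beta}$ spanned by all edges of colors $\alpha$ and $\beta$: it has maximum degree at most $2$ and hence decomposes into paths and even cycles. If the components of $H_{\alpha\beta}$ containing $u$ and $v$ differ, I swap $\alpha$ and $\beta$ along the $u$-component so that $\alpha$ becomes missing at $u$ as well, and then extend $e$ with $\alpha$. Otherwise $u$ and $v$ lie in a common $\alpha\beta$-path, and I would build a Vizing fan $v=v_0,v_1,v_2,\ldots$ at $u$ with $c(uv_{i+1})\in M(v_i)$, shift colors backward along the fan, and reduce to a Kempe-chain swap on a suitable pair of colors and a fan vertex $v_i$.

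The main obstacle is the bookkeeping in this last case: one must ensure that the fan terminates, that each color-shift and Kempe swap preserves properness, and that together they force a contradiction with $k=\lfloor\tfrac{3}{2}\Delta(G)\rfloor$. The Shannon-specific input is that the crude pigeonhole estimate $|M(u)|+|M(v)|\leq k$ already forces $d(u)+d(v)\geq k+2$, and the fan then ratchets this into a three-way inequality relating the colors used at $u$, at $v$, and at a fan vertex $v_i$, which is precisely where the factor $\tfrac{3}{2}$ emerges; the small cases $\Delta(G)\leq 2$ are handled by direct inspection as an additional base.
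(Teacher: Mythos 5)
The paper does not prove this lemma at all: it is quoted verbatim from Shannon's 1949 paper \cite{S} as a known black box, so there is no internal proof to compare yours against. Judged on its own terms, your proposal correctly sets up the standard framework (induction on $|E(G)|$, missing-color sets, the easy cases where $M(u)\cap M(v)\neq\emptyset$ or where $u$ and $v$ lie in different components of the $\alpha\beta$-subgraph), and your preliminary count $d(u)+d(v)\geq k+2$ is right. But the entire content of Shannon's theorem lives in the one case you defer to ``bookkeeping'': when $u$ and $v$ lie on a common $\alpha\beta$-chain. That case is not routine. The argument must produce a \emph{third} vertex $w$ in the fan whose missing-color set is disjoint from both $M(u)$ and $M(v)$, so that $|M(u)|+|M(v)|+|M(w)|\leq k$ combines with $|M(x)|\geq k-\Delta(G)+1$ to give $3(k-\Delta(G)+1)\leq k$, i.e.\ $k\leq\frac{3}{2}(\Delta(G)-1)<\lfloor\frac{3}{2}\Delta(G)\rfloor$, a contradiction. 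You gesture at a ``three-way inequality'' but never construct $w$ or justify the disjointness, which is precisely where the factor $\frac{3}{2}$ comes from; without it the proof does not close.

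A second, multigraph-specific gap: the Vizing fan you invoke is built for simple graphs, where the fan vertices $v_0,v_1,\dots$ are distinct neighbors of $u$. In a multigraph, parallel edges force the fan to revisit vertices, and the standard ``shift colors backward along the fan'' step can fail or loop unless you either redefine the fan on edges rather than on neighbors (a Shannon/Kierstead-style fan) or argue separately that a repeated fan vertex already yields the desired third vertex $w$. Since the whole point of the lemma is that it holds for multigraphs (where $\Delta+1$ fails, e.g.\ for the triangle with each edge tripled), this is not a detail you can wave away. As written, the proposal is an accurate outline of the right strategy but not a proof.
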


\begin{lem}\label{rd-shrink}
Let $G$ be a graph and $H$ a graph by shrinking a vertex subset of $G$ to a single vertex $h$. If $C_H(u,v)$ is a $u$-$v$-edge-cut in $H$, where $u,v\in V(H)\setminus h$, then it is also a $u$-$v$-edge-cut in $G$.
\end{lem}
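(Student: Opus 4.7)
The plan is to argue by contrapositive using the natural correspondence between $E(H)$ and a subset of $E(G)$. First I would fix notation: let $X\subseteq V(G)$ be the subset shrunk to $h$, so $V(H)=(V(G)\setminus X)\cup\{h\}$. Every edge of $H$ is either an edge of $G$ with both endpoints in $V(G)\setminus X$, or an edge $wh$ of $H$ arising from some edge $wx$ of $G$ with $w\notin X$ and $x\in X$. Fix, once and for all, a bijection $\varphi:E(H)\to E(G)\setminus E(G[X])$ realizing this identification, and let $C_G=\varphi(C_H(u,v))\subseteq E(G)$.

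Next I would show $C_G$ is a $u$-$v$-edge-cut in $G$. Assume for contradiction that it is not; then $G-C_G$ contains a $u$-$v$-path $P=uw_1w_2\cdots w_\ell v$. I would construct a $u$-$v$-walk $W$ in $H-C_H(u,v)$ by projecting $P$: traverse $P$ and replace every maximal subpath of $P$ whose internal vertices all lie in $X$ by a single visit to $h$. Equivalently, delete from $P$ all edges in $G[X]$ (which do not exist in $H$) and identify the remaining vertices of $X$ with $h$. Each surviving edge of $P$ lies in $E(G)\setminus E(G[X])$ and thus corresponds under $\varphi^{-1}$ to an edge of $H$; moreover, none of these edges is in $C_H(u,v)$, because $P$ avoids $C_G=\varphi(C_H(u,v))$. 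Since $u,v\notin X$, the endpoints of $W$ are still $u$ and $v$, so $W$ is a $u$-$v$-walk in $H-C_H(u,v)$ and therefore contains a $u$-$v$-path in $H-C_H(u,v)$, contradicting the assumption that $C_H(u,v)$ separates $u$ from $v$ in $H$.

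The argument is essentially bookkeeping; the only mild subtlety is making the projection map well defined on walks that enter and leave $X$ several times, which is why I prefer to phrase it as ``delete edges inside $G[X]$ and identify each remaining $X$-vertex with $h$'' rather than as a per-edge substitution. Since this automatically yields a closed walk at $h$ for each $X$-traversal, the resulting object is a genuine walk in $H$, and the conclusion follows.
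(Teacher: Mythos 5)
Your proof is correct and follows essentially the same route as the paper: assume the cut fails in $G$, take a $u$-$v$-path $P$ avoiding it, and project $P$ into $H=G/X$ (the paper writes this as the subgraph $P/Y$) to obtain a $u$-$v$-path in $H$ avoiding $C_H(u,v)$, a contradiction. Your version merely spells out the edge bijection and the walk-to-path reduction that the paper leaves implicit.
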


\begin{proof}
Let $H=G/Y$, where $Y\subseteq V(G)$. Assume that $C_H(u,v)$ is not a $u$-$v$-edge-cut in $G$, namely, there exists a $u$-$v$-path $P$ avoiding $C_H(u,v)$ in $G$. Then the subgraph $P/Y$ of $G/Y$ would also contain a $u$-$v$-path in $G/Y$ avoiding $C_H(u,v)$, a contradiction.
\end{proof}

We define a shrinking operation on a graph $G$ as follows.

For a given graph $G$, let $\lambda^+(G)=k$ and $S=\{x|d(x)\geq k+1\}$.  For fixed $k$ and $S$, suppose $|S|\geq 2$. Let $u,v$ be two vertices of $S$. Then we can find a minimum $u$-$v$-edge-cut $C(u,v)$
such that $|C(u,v)|\leq \lambda^+(G)$ and $G\setminus C(u,v)=C_1\cup C_2$. Then we define the two operations $o$ and $O$ as follows:
$$o(\{G\})=
\begin{cases}
\{G/ V(C_1), G/ V(C_2)\},& \text {if $|G\cap S|\geq 2$},\\
\{G\},& \text{otherwise.}
\end{cases}
$$
$$
O(\{G_1,G_2,\cdots,G_p\})=\cup^p_{i=1}o(\{G_i\}).
$$
We keep the multiple edges in each operation.
Since the graph is split into two pieces when we do
the operation, the operation cannot last endlessly.
Hence, there exists an integer $r$ such that $O^r(\{G\})=O^{r+1}(\{G\})$.
Finally, we get a finite set of connected graphs in which each graph has
at most one vertex with degree at least $\lambda^+(G)+1$. We call this procedure
of splitting and shrinking a graph $G$ into such pieces simply the {\it shrinking operation}
on $G$.

Then we derive the following theorem by the shrinking operation and Lemmas \ref{rd-2-multi} and \ref{rd-shrink}.

\begin{thm}\label{good bound}
Let $G$ be a loopless multigraph with upper edge-connectivity $\lambda^+(G)$. Then $\textnormal{rd}(G)\leq \lfloor\frac{3}{2}\lambda^+(G)\rfloor$.
Moreover, the bound is sharp.
\end{thm}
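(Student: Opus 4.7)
The plan is to combine the shrinking operation with Shannon's edge-coloring bound. Let $k=\lambda^+(G)$. Performing the shrinking operation on $G$ yields a finite family of pieces $G_1,\dots,G_p$ together with the shrinking cuts $C^{(1)},\dots,C^{(p-1)}$; each piece has at most one vertex of original $G$-degree $\geq k+1$ (denote it $v_i$ when it exists), and every other vertex—including each shrunken vertex, whose $G_i$-degree equals the size of the cut that created it—has $G_i$-degree at most $k$. Hence $\Delta(G_i-v_i)\leq k$, and Lemma~\ref{rd-2-multi} supplies a proper edge-coloring of $G_i-v_i$ with $\lfloor 3k/2\rfloor$ colors. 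I extend it to the edges at $v_i$: at each neighbor $w$ of $v_i$, the inequality $\lfloor 3k/2\rfloor-d_{G_i-v_i}(w)\geq d_{G_i}(v_i,w)$ leaves enough unused colors at $w$ to assign distinct new colors to the $v_iw$-edges. The resulting coloring of $G_i$ is proper at every vertex except possibly $v_i$; in particular, each shrunken vertex is proper, so the edges of any shrinking cut incident to it receive pairwise distinct colors.

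I next glue the per-piece colorings into one global coloring of $G$ using the common palette $[\lfloor 3k/2\rfloor]$. Internal edges lie in exactly one piece, whereas the edges of a cut $C^{(t)}$ lie in exactly the two pieces on either side. The shrinking operation organizes the pieces and cuts into an acyclic tree; I root it and traverse top-down. For each non-root piece $G_j$ with parent $G_i$ across the shared cut $C^{(t)}$, the two colorings restrict to injections of the same cardinality on $C^{(t)}$, so I post-compose $G_j$'s coloring with a permutation of $[\lfloor 3k/2\rfloor]$ extending the bijection from $G_j$'s values on $C^{(t)}$ to $G_i$'s values (such an extension of two injections of the same cardinality always exists). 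Acyclicity of the tree guarantees that each cut is reconciled exactly once, producing a well-defined edge-coloring $c$ of $G$ with $\lfloor 3k/2\rfloor$ colors.

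To verify that $c$ is a rainbow disconnection coloring of $G$, I split by the location of $x$ and $y$. If $x,y$ lie in the same piece $V_i$, then at most one of them equals $v_i$, so one of $E_x,E_y$ taken in $G_i$ is rainbow, and by Lemma~\ref{rd-shrink} the same edge-set is a rainbow cut of $G$ separating $x$ from $y$. If $x,y$ lie in distinct pieces, then by Lemma~\ref{rd-shrink} the cut $C^{(t)}$ at their least common ancestor in the shrinking tree is a cut of $G$ separating them, and it is rainbow by the construction above. The main obstacle here is the gluing step; the acyclicity of the shrinking tree is the key structural fact that lets the palette-permutation argument avoid cyclic constraints. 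For sharpness, I would exhibit an explicit small loopless multigraph—for instance a Shannon-type configuration on a few vertices with suitable parallel-edge multiplicities—for which $\lfloor 3\lambda^+(G)/2\rfloor$ colors are necessary for rainbow disconnection.
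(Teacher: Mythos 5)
Your proof of the upper bound follows essentially the same route as the paper's: the shrinking operation to reduce to pieces with at most one vertex of degree exceeding $\lambda^+(G)$, Shannon's bound (Lemma~\ref{rd-2-multi}) on the piece minus that vertex, a greedy extension of the coloring to the edges at the exceptional vertex using leftover colors at its neighbors, and a recolor-and-glue step across the shrinking cuts justified by Lemma~\ref{rd-shrink}. Your handling of parallel edges at the exceptional vertex and your palette-permutation formulation of the gluing are, if anything, slightly more careful than the paper's, but they are the same argument.

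The genuine gap is the sharpness claim, which is part of the statement and which you do not establish. You defer to ``a Shannon-type configuration with suitable parallel-edge multiplicities,'' but that candidate does not work: for the triangle on vertices $a,b,c$ with $m$ parallel edges on each pair, every minimal $u$-$v$-edge-cut is a vertex star, so a rainbow disconnection coloring only needs two of the three stars $E_a,E_b,E_c$ to be rainbow. Coloring the $ab$-edges with $1,\dots,m$ and both the $ac$-edges and the $bc$-edges with $m+1,\dots,2m$ makes $E_a$ and $E_b$ rainbow, so $\textnormal{rd}=2m=\lambda^+$, far below $\lfloor \tfrac32\lambda^+\rfloor=3m$; the large chromatic index of the Shannon triangle is irrelevant here because rainbow disconnection does not force a proper edge-coloring. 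The paper instead takes the Petersen graph, where $\lambda^+=3$ and $\textnormal{rd}=4=\lfloor\tfrac92\rfloor$ by Lemma~\ref{P}, and even that only witnesses sharpness for $\lambda^+=3$. You need such a concrete example (or an infinite family) to close this part of the statement.
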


\begin{proof}
Suppose that we get a family of graph $\mathcal {H}=\{H_1,H_2,\ldots,H_t\}$ by the shrinking operation on $G$.
Obviously, $\Delta(H_i)\leq \lambda^+(G)$ except one vertex of $H_i$ for each $i\in[t]$. For each graph $H_i$, we define a rainbow disconnection coloring $f_i$ as follows.
Let $h_i$ be the unique vertex of $H_i$ with $d_{H_i}(h_i)\geq\lambda^+(G)+1$ and $H_i'=H_i-h_i$.
It follows from Lemmas \ref{rd-lem1} and \ref{rd-2-multi} that
$\chi'(H'_i)\leq \frac{3}{2}\Delta(H_i')\leq \lfloor\frac{3}{2}\lambda^+(G)\rfloor$
for each $i\in[t]$.
For each vertex $u\in N_{H_i}(h_i)$,
since $deg_{H_i}{u} \leq \Delta(H_i)$, there is $a_u\in [\lfloor\frac{3}{2}\lambda^+(G)\rfloor]$ such that the color
$a_u$ is not assigned to any edge incident with $u$.
Define $f_i(h_iu) = a_u$.
Let $w$ and $z$ be two distinct vertices of $H_i$.
Then at least one of the vertices $w$ and $z$ belongs to $H'_i$, say $w\in V(H'_i)$.
Since $E_w$ separates $w$ and $z$ and is rainbow,
it follows that $f_i$ is a rainbow disconnection coloring of $H_i$ using colors from $[\lfloor\frac{3}{2}\lambda^+(G)\rfloor]$  in which each vertex of $H_i$ is proper except vertex $h_i$, where and in what follows a vertex $v$ is \emph{proper} if the set of edges incident with $v$ is rainbow.
Namely, rd$(H_i)\leq \lfloor\frac{3}{2}\lambda^+(G)\rfloor$ for each $i\in[t]$.

Now we claim that we can get a rainbow disconnection coloring of $G$ using colors from $[\lfloor\frac{3}{2}\lambda^+(G)\rfloor]$ by adjusting coloring of shrinking graphs.
%First, we give a label for a family of graphs $\mathcal {H}$  by step number of operation.
%At $k'th$ step obtained graph are denoted by $G_{k,1}, G_{k,2},\ldots, G_{k,j_k}$, where $k\in[\ell]$ (i.e $\mathcal {H}=\cup_{i\in[t]}{H_i}=\cup_{i\in[k]\sum_{i\in[k]}{j_i}=t}{G
%_{i,j_i}}$).
%We obtain a rd-coloring of $G$ by the finite step operation graphs for $\mathcal {H}=\{H_1,H_2,\ldots,H_t\}$.
Suppose that $F_1$ and $F_2$ are obtained from $F$
by one shrinking operation for vertices $x_1,x_2$ of $F$,
where $d(x_i)\geq \lambda^+(G)+1$ in $F$.
Moreover, suppose that $F_1$ and $F_2$ have a
rainbow disconnection coloring using colors from $[\lfloor\frac{3}{2}\lambda^+(G)\rfloor]$, respectively.
With loss of generality, let $F_i=F/V(F_i)$ and
$x_i\in F_i$ ($i\in[2]$). Let $y_{i}$ be the
vertex by shrinking vertex-set $V(F_i)$ in $F$ ($i\in[2]$).
Note that $d(y_{i})\leq \lambda^+(G)$ in $F_i$ ($i\in[2]$),
so $y_{i}\neq x_{i}$ and the vertex $y_{i}$ is
proper in $F_{i}$.
Thus, we can adjust the colors of edges that are
incident with $y_{2}$ in $F_2$ such that $c(e)|_{F_{1}}=c(e)|_{F_{2}}$ for each $e\in C(x_1,x_2)$ in $F$.
Then we obtain a rainbow disconnection coloring of $F$
by identifying edge-set $C(x_1,x_2)$
for $F_{1}$ and $F_{2}$ using colors from $[\lfloor\frac{3}{2}\lambda^+(G)\rfloor]$.
For any two vertices $p,q$ of $F$,
if $p,q$ belong to $V(F_1)$ and $V(F_2)$, respectively,
then $C(x_1,x_2)$ is a $p$-$q$-rainbow-cut in $F$;
if $p,q$ belong to one of $V(F_1)$ and $V(F_2)$, say $F_{1}$,
then there exists a $p$-$q$-rainbow-cut $C_{F_1}(p,q)$ in
$F_1$ that is also a rainbow-cut in
$F$ by Lemma \ref{rd-shrink}.
Repeating the above inverse shrinking procedure, we finally get a rainbow disconnection coloring of $G$ using colors from $[\lfloor\frac{3}{2}\lambda^+(G)\rfloor]$.
Hence, rd$(G)\leq \lfloor\frac{3}{2}\lambda^+(G)\rfloor$.
Moreover, for the Petersen graph $P$,
we have that rd$(P)=4=\lfloor\frac{3}{2}\lambda^+(P)\rfloor$
since $\lambda^+(P)=3$. Thus, the upper bound is sharp in some sense.
\end{proof}

Next we obtain another bound for rd$(G)$.

\begin{thm}
Let $G$ be a graph of order $n$ with maximum degree $\Delta(G)$ and upper edge-connectivity $\lambda^+(G)$. Then $\textnormal{rd}(G)\leq \min\{n+\lambda^+(G)-\Delta(G)-1, \Delta(G)+1\}$. Furthermore, the bound is sharp.
\end{thm}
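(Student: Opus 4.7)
My plan is to prove each summand of the $\min$ separately. The inequality $\textnormal{rd}(G)\le \Delta(G)+1$ is immediate from Lemma \ref{rd-lem1} together with Vizing's theorem $\chi'(G)\le \Delta(G)+1$, so the substantive task is to establish $\textnormal{rd}(G)\le n+\lambda^+(G)-\Delta(G)-1$.

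To do this, I would fix a vertex $u$ with $d(u)=\Delta(G)$, set $N(u)=\{v_1,\ldots,v_\Delta\}$ and $W=V(G)\setminus N[u]=\{w_1,\ldots,w_m\}$ with $m=n-1-\Delta$, and target a palette of size $k=m+\lambda^+$. If $\Delta(G-u)\le k-1$, so that every vertex other than $u$ has degree at most $n+\lambda^+-\Delta-2$, then Lemma \ref{Class}(i) directly yields a rainbow disconnection coloring of $G$ with $\Delta(G-u)+1\le k$ colors. The genuinely new case is when some vertex $v_i\in N(u)$ has $d(v_i)\ge k$, because the star at $v_i$ then cannot be made rainbow within $k$ colors, and the required $v_i$-$y$-rainbow cuts must be obtained elsewhere.

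In that case my plan is to split the palette into $\lambda^+$ primary colors and $m$ auxiliary colors, where the $j$-th auxiliary color is dedicated to the vertex $w_j\in W$. For each high-degree $v_i$, I would fix a minimum $u$-$v_i$-edge-cut (of size $\lambda(u,v_i)\le \lambda^+$) and color its edges rainbow using the primary palette, so that this cut separates $v_i$ from every vertex on the $u$-side. The remaining edges are colored so that each $w_j$ has a rainbow star containing its auxiliary color, each low-degree $v_i$ has a rainbow star, and the edges incident to $u$ are assigned colors compatible with the chosen cuts.

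The main obstacle is the bookkeeping: the chosen minimum cuts for distinct $v_i$ may overlap, the auxiliary colors must appear only at the intended $w_j$'s stars, and the proper-coloring constraints at neighbors of $u$ of intermediate degree all have to hold simultaneously. I expect this will be handled by a shrinking step analogous to the proof of Theorem \ref{good bound}, contracting each side of a chosen minimum $u$-$v_i$-cut, coloring each piece inductively, and then gluing the colorings along the common cut edges. For sharpness, the Petersen graph $P$ satisfies $\textnormal{rd}(P)=4=\Delta(P)+1$ by Lemma \ref{P}, matching the second summand; and the star $K_{1,n-1}$, with $\lambda^+=1$ and $\Delta=n-1$, gives $\textnormal{rd}(K_{1,n-1})=1=n+\lambda^+-\Delta-1$, matching the first summand.
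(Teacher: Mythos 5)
Your first case (when $\Delta(G-u)\le n+\lambda^+(G)-\Delta(G)-2$) coincides with the paper's entire argument, but you have missed the observation that makes this the \emph{only} case, and as a result you invent a ``genuinely new case'' that cannot occur and that you do not actually prove. The key point is this: for any vertex $x\neq u$, each neighbor $y$ of $x$ lying in $N[u]$ yields a $u$-$x$-path of length at most $2$ (either the edge $ux$ or the path $u\,y\,x$), and these paths are pairwise edge-disjoint; hence $|N(x)\cap N[u]|\le\lambda(u,x)\le\lambda^+(G)$. Since $x$ has at most $|V(G)\setminus N[u]|=n-1-\Delta(G)$ further neighbors outside $N[u]$ (one fewer if $x$ itself lies outside $N[u]$), a short count in either case gives $d_{G-u}(x)\le n+\lambda^+(G)-\Delta(G)-2$. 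So $\Delta(G-u)\le\min\{n+\lambda^+(G)-\Delta(G)-2,\ \Delta(G)\}$ unconditionally, and Lemma \ref{Class}(i) immediately yields the full bound $\textnormal{rd}(G)\le\min\{n+\lambda^+(G)-\Delta(G)-1,\ \Delta(G)+1\}$, with no need to invoke Vizing separately for the second term.

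Because you did not see this, the bulk of your proposal is devoted to a case in which some $v_i\in N(u)$ has $d(v_i)\ge n+\lambda^+(G)-\Delta(G)-1$, and there your argument is only a plan: you yourself flag the overlapping minimum cuts, the placement of auxiliary colors, and the simultaneous proper-star constraints as unresolved ``bookkeeping.'' As written this is not a proof, and the shrinking-and-gluing machinery of Theorem \ref{good bound} is not set up to deliver a palette of size $n+\lambda^+(G)-\Delta(G)-1$ in any evident way. The fix is simply to delete that case via the degree bound above. Your sharpness examples (the Petersen graph for $\Delta(G)+1$ and $K_{1,n-1}$ for $n+\lambda^+(G)-\Delta(G)-1$) are fine and match the paper's.
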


\begin{proof}
Let $v$ be a vertex with $d(v)=\Delta(G)$ and $S=V(G)\setminus N[v]$. Then there exist at most $\lambda^+(G)$ edges from $x$ to $N(v)$ for $x\in S$ and $d_{N[v]}(x)\leq \lambda^+(G)$ for $x\in N[v]$ by the definition of upper edge-connectivity. Denote $G'=G-v$. Then $d_{G'}(x)\leq \min\{n+\lambda^+(G)-\Delta(G)-2, \Delta(G)\}$ for any vertex $x\in G'$. So, rd$(G)\leq \Delta(G')+1=\min\{n+\lambda^+(G)-\Delta(G)-1, \Delta(G)+1\}$ by Lemma \ref{Class}.
Moreover, if $G=K_{1,n-1}$ or $W_{n}$, then rd$(G)=n+\lambda^+(G)-\Delta(G)-1$ by Lemmas \ref{P}; if $G$ is the Petersen graph, then rd$(G)=4=\Delta(G)+1$ by Lemma \ref{P}. The upper bound is sharp in some sense.
\end{proof}

In the rest of this section,  we always assume that all graphs have at
least four vertices, and that both $G$ and $\overline{G}$ are connected. For
any vertex $u\in V(G)$, let $\bar{u}$ denote the vertex in $\overline{G}$ corresponding to the vertex $u$. We then characterize the graphs which meet the upper bound of the Nordhaus-Gaddum type results obtained early by us. The following several lemmas will be used.

\begin{lem}{\upshape\cite{ACCGZ}}\label{rd-classs}
Let $G$ be a connected graph. If every connected component of $G_\Delta$
is a unicyclic graph or a tree, and $G_\Delta$ is not a disjoint union of cycles, then $G$ is in Class $1$.
\end{lem}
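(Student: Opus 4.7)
The plan is to argue by contradiction using Vizing's Adjacency Lemma (VAL). Assume $G$ is in Class $2$, so $\chi'(G)=\Delta+1$ where $\Delta:=\Delta(G)$. Extract a critical Class $2$ subgraph $G^{*}$ of $G$ by iteratively deleting edges while maintaining $\chi'=\Delta+1$, until any further edge deletion would reduce $\chi'$. Since $\chi'(G^{*})=\Delta+1\leq \Delta(G^{*})+1$ and $G^{*}\subseteq G$, we get $\Delta(G^{*})=\Delta$, so $G^{*}$ is a critical Class $2$ graph of the same maximum degree as $G$.

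Next, apply VAL to $G^{*}$: for every edge $uv\in E(G^{*})$, the vertex $v$ is adjacent in $G^{*}$ to at least $\Delta-d_{G^{*}}(u)+1$ vertices of $G^{*}$-degree $\Delta$ distinct from $u$. Take any $v\in V(G^{*}_{\Delta})$ and any $G^{*}$-neighbor $u$. If $d_{G^{*}}(u)=\Delta$, then $u$ itself is a $\Delta$-neighbor of $v$ and VAL provides at least one more; if $d_{G^{*}}(u)<\Delta$, then VAL directly supplies at least two further $\Delta$-neighbors of $v$. In either case $d_{G^{*}_{\Delta}}(v)\geq 2$, so $\delta(G^{*}_{\Delta})\geq 2$.

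Now observe that $G^{*}_{\Delta}$ is a subgraph of $G_{\Delta}$: vertices of $V(G^{*}_{\Delta})$ have $G$-degree $\Delta$, and edges of $G^{*}_{\Delta}$ join two such vertices. Hence each component of $G^{*}_{\Delta}$ lies inside a tree or unicyclic component of $G_{\Delta}$. But a connected graph with $\delta\geq 2$ cannot embed in a tree and can embed in a unicyclic graph only as its unique cycle. Therefore every component of $G^{*}_{\Delta}$ is precisely the cycle of some unicyclic component of $G_{\Delta}$, so $G^{*}_{\Delta}$ is itself a disjoint union of cycles.

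The last step — converting the hypothesis that $G_{\Delta}$ is not a disjoint union of cycles into a contradiction — is the main obstacle, because the structural constraint derived so far pins down only $G^{*}_{\Delta}$, not $G_{\Delta}$ itself. Let $D$ be a component of $G_{\Delta}$ that is either a tree or a unicyclic non-cycle; $D$ contains a pendant vertex $x$ with $d_{G_{\Delta}}(x)\leq 1$. To finish, I would argue that $x\in V(G^{*}_{\Delta})$, which immediately contradicts $\delta(G^{*}_{\Delta})\geq 2$. The plan here is to sharpen the extraction of $G^{*}$ via a Vizing-fan/Kempe-chain recoloring argument exploiting criticality: if some edge incident to a vertex of $V(G_{\Delta})$ were missing from $G^{*}$, one could rebuild a proper $\Delta$-coloring at that vertex and propagate along a fan, contradicting $\chi'(G^{*})=\Delta+1$. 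This forces $V(G^{*}_{\Delta})\supseteq V(G_{\Delta})$ and closes the argument, and it is the technical heart of the proof that requires the standard detailed fan case analysis.
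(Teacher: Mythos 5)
First, a point of reference: the paper gives no proof of Lemma~\ref{rd-classs} at all --- it is quoted from \cite{ACCGZ} --- so your attempt can only be judged on its own terms. The first part of your argument is correct and is the standard Fournier-style opening: passing to a critical subgraph $G^{*}$ with $\Delta(G^{*})=\Delta(G)$, deducing $\delta(G^{*}_{\Delta})\geq 2$ from Vizing's Adjacency Lemma, noting $G^{*}_{\Delta}\subseteq G_{\Delta}$, and concluding that every component of $G^{*}_{\Delta}$ is the unique cycle of a unicyclic component of $G_{\Delta}$. This already proves the special case where $G_{\Delta}$ is a forest (Fournier's theorem), but for the full statement it yields no contradiction: a nonempty disjoint union of cycles sitting inside the unicyclic components of $G_{\Delta}$ is perfectly consistent with $G_{\Delta}$ also having a tree component or a non-cycle unicyclic component elsewhere.

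The gap is your final step, and the proposed repair does not work. The inclusion $V(G_{\Delta})\subseteq V(G^{*}_{\Delta})$ is false as a general fact about critical subgraphs: let $H$ be the Petersen graph minus a vertex (a critical Class $2$ graph with $\Delta=3$ and three vertices of degree $2$), and let $G$ be obtained by attaching a pendant edge at one such vertex $a$. Then $G$ is Class $2$ with $\Delta(G)=3$ and $a\in V(G_{\Delta})$, but every critical subgraph of $G$ has minimum degree at least $2$, hence equals $H$, in which $a$ has degree $2$; so $a\notin V(G^{*}_{\Delta})$. (This $G$ violates the lemma's hypothesis, which is exactly the point: the inclusion can only be established by using the structural hypothesis on $G_{\Delta}$, so it is essentially equivalent to the theorem rather than a lemma obtainable ``for free.'') Moreover, the recoloring justification you sketch is circular: criticality of $G^{*}$ says precisely that $G^{*}-e$ admits a proper $\Delta$-edge-coloring for every $e\in E(G^{*})$, and the existence of edges of $G$ outside $G^{*}$ incident with a major vertex contradicts nothing about $G^{*}$. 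To close the argument one must work directly in $G$ with the low-degree vertex $x$ of the offending component of $G_{\Delta}$ --- for instance via a maximum partial $\Delta$-edge-coloring and a fan/Kempe-chain extension anchored at $x$ --- rather than hoping the critical subgraph remembers $x$. As written, your proof establishes only the forest case.
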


\begin{lem}{\upshape\cite{BCHL}}\label{rd-n-2}
Let $G$ be a connected graph of order $n$. If $\textnormal{rd}(G)\geq n-2$, then $G$ has at least two vertices of degree at least $n-2$.
\end{lem}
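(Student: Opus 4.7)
The plan is to prove the contrapositive: assuming $G$ has at most one vertex of degree at least $n-2$, I will show $\textnormal{rd}(G)\leq n-3$. The argument splits into cases based on $\Delta(G)$, and in the harder cases I plan to remove a well-chosen vertex and invoke Lemma \ref{Class} together with Lemma \ref{rd-classs}.

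First, if $\Delta(G)\leq n-4$, Lemma \ref{rd-lem1} immediately yields $\textnormal{rd}(G)\leq \Delta(G)+1\leq n-3$. Otherwise $\Delta(G)\geq n-3$. I would choose $u$ to be the unique vertex with $d(u)\geq n-2$ if such a vertex exists, and otherwise any vertex of maximum degree $n-3$. Set $H=G-u$. The key estimate is that for every $x\in V(H)$ one has $d_H(x)=d_G(x)-[x\sim u]$, and, by the hypothesis, $d_G(x)\leq n-3$ for all $x\neq u$. Consequently $\Delta(H)\leq n-3$, and moreover each $x\in N_G(u)$ satisfies $d_H(x)\leq n-4$.

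If $\Delta(H)\leq n-4$, Lemma \ref{Class}(i) gives $\textnormal{rd}(G)\leq\Delta(H)+1\leq n-3$. Otherwise $\Delta(H)=n-3$, and I would like to invoke Lemma \ref{Class}(ii); the condition $d_H(x)\leq \Delta(H)-1$ for $x\in N_G(u)$ is already verified above, so it remains to show that $H$ is in Class $1$, which I plan to establish via Lemma \ref{rd-classs}. Observe that any vertex of $H$ of degree $n-3$ must be a non-neighbor of $u$ in $G$; since $|N_G[u]|\geq n-2$, the set $V(G)\setminus N_G[u]$ has cardinality at most $2$. Hence $H_\Delta$ consists of at most two vertices, so every component of $H_\Delta$ is an isolated vertex or a single edge—each of which is a tree, and $H_\Delta$ is certainly not a disjoint union of cycles. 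Lemma \ref{rd-classs} then implies $H$ is in Class $1$, and Lemma \ref{Class}(ii) yields $\textnormal{rd}(G)\leq\Delta(H)=n-3$.

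The main obstacle is the sub-case $\Delta(H)=n-3$: one must control the structure of $H_\Delta$ precisely enough to verify the hypotheses of Lemma \ref{rd-classs}. A secondary technical issue is that Lemma \ref{rd-classs} is stated for connected graphs, whereas $H=G-u$ need not be connected. In that event one applies the lemma componentwise, or observes that the proof of Lemma \ref{Class}(ii) really only needs a proper edge-coloring of $H$ with $\Delta(H)$ colors, which still exists provided each component of $H$ is Class $1$.
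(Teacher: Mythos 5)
This paper does not actually prove Lemma~\ref{rd-n-2}; it is imported from \cite{BCHL} without proof, so there is no in-paper argument to compare yours against. Judged on its own, your contrapositive argument checks out: choosing $u$ as the unique vertex of degree at least $n-2$ (or any vertex of degree $n-3$ if none exists), the hypothesis gives $d_G(x)\le n-3$ for all $x\ne u$, hence $\Delta(H)\le n-3$ and $d_H(x)\le n-4$ for every $x\in N_G(u)$; in the only nontrivial sub-case $\Delta(H)=n-3$, your observation that $H_\Delta\subseteq V(G)\setminus N_G[u]$, a set of size at most two, forces $H_\Delta$ to be a forest on at most two vertices, so Lemma~\ref{rd-classs} (applied componentwise, as you correctly flag, since $G-u$ need not be connected) yields Class $1$ and Lemma~\ref{Class}(ii) gives $\textnormal{rd}(G)\le n-3$. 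The small cases ($\Delta(H)\le n-4$, or $n\le 4$) are covered by Lemma~\ref{Class}(i) and the trivial bound. This is precisely the combination of tools the present paper deploys in its own proof of Lemma~\ref{rd-cha-n-2} (delete a high-degree vertex, verify a small core to get Class $1$ via Lemma~\ref{rd-classs}, then invoke Lemma~\ref{Class}), so your route is fully consistent with the authors' methods even though their actual proof of this particular lemma lives in the cited reference.
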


\begin{lem}{\upshape\cite{CDHHZ}}\label{rd-lem4}
If $H$ is a connected subgraph of a graph $G$, then
$\textnormal{rd}(H)\leq \textnormal{rd}(G)$.
\end{lem}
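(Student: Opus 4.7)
The plan is to restrict an optimal rd-coloring of $G$ to $H$ and verify that the restriction is still a rainbow disconnection coloring. Concretely, let $c : E(G) \to [\textnormal{rd}(G)]$ be an rd-coloring of $G$ and let $c' = c|_{E(H)}$. I will show that for every pair of distinct vertices $u,v \in V(H)$ there is a $u$-$v$-rainbow-cut of $H$ under $c'$, which immediately gives $\textnormal{rd}(H)\leq \textnormal{rd}(G)$ since $c'$ uses at most $\textnormal{rd}(G)$ colors.

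Fix two distinct vertices $u,v\in V(H)$. Since $c$ is an rd-coloring of $G$, there is a $u$-$v$-rainbow-cut $R\subseteq E(G)$ of $G$. Set $R' = R\cap E(H)$. The key observation is that $R'$ is a $u$-$v$-edge-cut of $H$: because $H$ is a subgraph of $G$, any $u$-$v$-path $P$ in $H$ is also a $u$-$v$-path in $G$, so $P$ must contain an edge of $R$; that edge lies in $E(H)$ and hence in $R'$. Thus every $u$-$v$-path in $H$ meets $R'$, which means $u$ and $v$ are in different components of $H-R'$. Since $H$ is connected and $u\neq v$, such a path exists, so in particular $R'\neq\emptyset$. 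Finally, $R'\subseteq R$ and $R$ is rainbow, so $R'$ is rainbow as well. Therefore $R'$ is a $u$-$v$-rainbow-cut of $H$ under $c'$.

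There is no real obstacle here; the argument is routine once one spots that restricting a rainbow-cut of $G$ to $E(H)$ preserves both the cut property (every $u$-$v$-path in $H$ lifts unchanged to a $u$-$v$-path in $G$) and the rainbow property (trivially, by taking a subset). The only point that should be mentioned explicitly in the write-up is that $H$ being connected guarantees $R\cap E(H)$ is genuinely nonempty whenever $u\neq v$, so that $R'$ is a legitimate edge-cut in the sense of the definition.
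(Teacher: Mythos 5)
Your proof is correct. Note that the paper itself gives no proof of this lemma---it is quoted from \cite{CDHHZ}---but your argument (restrict an rd-coloring of $G$ to $E(H)$, and for each pair $u,v$ intersect a $u$-$v$-rainbow-cut $R$ of $G$ with $E(H)$, observing that every $u$-$v$-path of $H$ lifts to one of $G$ and must therefore meet $R\cap E(H)$, which is rainbow as a subset of a rainbow set) is the standard one, and you correctly handle the only delicate point, namely that connectedness of $H$ guarantees $R\cap E(H)\neq\emptyset$.
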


\begin{lem}{\upshape\cite{CDHHZ}}\label{rd-block}
Let $G$ be a connected graph, and let $B$ be a block of $G$ such
that $\textnormal{rd}(B)$ is maximum among all the blocks of $G$. Then
$\textnormal{rd}(G)=\textnormal{rd}(B)$.
\end{lem}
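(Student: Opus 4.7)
The lower bound $\textnormal{rd}(B)\leq\textnormal{rd}(G)$ is immediate from Lemma \ref{rd-lem4}, so I only need to exhibit an rd-coloring of $G$ using $\textnormal{rd}(B)$ colors. The plan is to color each block $B'$ of $G$ independently by a fixed rd-coloring of $B'$ drawn from the palette $[\textnormal{rd}(B)]$; this is legitimate because $\textnormal{rd}(B')\leq\textnormal{rd}(B)$ for every block, by the maximality of $B$. Since distinct blocks share at most a cut vertex and never share an edge, these local colorings paste together into a well-defined edge-coloring of $G$ on at most $\textnormal{rd}(B)$ colors. What remains is to verify that every pair of distinct vertices $u,v\in V(G)$ admits a $u$-$v$-rainbow-cut in this global coloring.

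The structural fact I will lean on throughout is that a simple path cannot leave a block and return to it: if a $u$-$v$-path $P$ exits a block $B'$ through a cut vertex $w\in V(B')$ into a component of $G-w$ different from the one containing $V(B')\setminus\{w\}$, then to re-enter $B'$ the path $P$ would have to revisit $w$, contradicting that $P$ is simple.

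The argument then splits into two cases. If $u,v$ lie in a common block $B'$, I take the $u$-$v$-rainbow-cut $R\subseteq E(B')$ supplied by the local rd-coloring of $B'$; the structural fact forces any hypothetical $u$-$v$-path in $G-R$ to stay inside $B'$, contradicting that $R$ separates $u$ from $v$ in $B'$. If $u,v$ lie in different blocks, I pick a block $B'$ containing $u$ whose block-tree neighbour cut vertex $c$ on the route to $v$ satisfies $c\in V(B')$ and $c\neq u$ (if $u$ itself is a cut vertex, $B'$ is just the first block on the $u$-to-$v$ walk in the block-cut tree). Then $c$ separates $u$ from $v$ in $G$; applying a $u$-$c$-rainbow-cut $R\subseteq E(B')$ from the local coloring, any alleged $u$-$v$-path in $G-R$ must traverse $c$, and by the structural fact its initial $u$-to-$c$ segment lies entirely in $B'$, contradicting that $R$ is a $u$-$c$-cut in $B'$.

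The main obstacle is the bookkeeping when $u$ or $v$ is itself a cut vertex: one must argue via the block-cut tree that an appropriate block $B'$ incident to $u$ and a suitable separating cut vertex $c\in V(B')\setminus\{u\}$ can always be selected. Once this setup is in place, both cases reduce cleanly to transferring a rainbow-cut from the local coloring of $B'$ up to $G$, and the equality $\textnormal{rd}(G)=\textnormal{rd}(B)$ follows.
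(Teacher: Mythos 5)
The paper does not prove this lemma: it is imported verbatim from \cite{CDHHZ}, so there is no in-paper proof to compare against. Your argument is correct and is the standard one for this fact --- lower bound from subgraph monotonicity (Lemma \ref{rd-lem4}), upper bound by coloring the pairwise edge-disjoint blocks independently from the palette $[\textnormal{rd}(B)]$, and then using the block-cut-tree facts that a simple path between two vertices of a common block cannot leave that block, and that a path between vertices in different blocks must pass through the first separating cut vertex $c$ of the block containing $u$, so a $u$-$c$-rainbow-cut inside that block already separates $u$ from $v$ in $G$. All the needed existence claims (the block $B'$, the cut vertex $c\neq u$) do follow from the block-cut tree as you indicate, so I see no gap.
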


\begin{lem}{\upshape\cite{CDHHZ}}\label{rd-lem3}
Let $G$ be a connected graph of order $n\geq 2$. Then
$\textnormal{rd}(G)=n-1$ if and only if $G$ has at least two
vertices of degree $n-1$.
\end{lem}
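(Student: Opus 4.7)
The plan is to prove the two implications separately, using the already-stated lemmas as the main tools.

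For the \emph{sufficiency} direction, assume $G$ has two vertices $u,v$ of degree $n-1$. Then $uv$ is an edge, and for each of the other $n-2$ vertices $w$, the two-edge path $u$-$w$-$v$ exists. Together with the edge $uv$ this gives $n-1$ pairwise edge-disjoint $u$-$v$-paths. The proposition identifying $\lambda(u,v)$ with the maximum number of edge-disjoint $u$-$v$-paths, combined with the trivial bound $\lambda(u,v)\le\min\{d(u),d(v)\}=n-1$, yields $\lambda(u,v)=n-1$, hence $\lambda^+(G)\ge n-1$ and, by Lemma \ref{rd-lem1}, $\textnormal{rd}(G)\ge n-1$. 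For the matching upper bound I would take $H=G-u$: since the only vertex of $G$ possibly having degree $n-1$ besides $v$ is $u$ itself, and $v$ loses one neighbor in passing from $G$ to $H$, every vertex of $H$ has degree at most $n-2$. Lemma \ref{Class}(i) then gives $\textnormal{rd}(G)\le \Delta(H)+1\le n-1$.

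For the \emph{necessity} direction, suppose $\textnormal{rd}(G)=n-1$. By Lemma \ref{rd-n-2}, $G$ has at least two vertices of degree $\ge n-2$. Assume for contradiction that $G$ has at most one vertex of degree $n-1$; the goal is to exhibit a vertex $u$ whose removal makes either Lemma \ref{Class}(i) or Lemma \ref{Class}(ii) force $\textnormal{rd}(G)\le n-2$. I split into two cases. If exactly one vertex $u^\ast$ has degree $n-1$, take $u=u^\ast$: every other vertex of $G$ has $G$-degree $\le n-2$ and is adjacent to $u$, so $\Delta(G-u)\le n-3$ and Lemma \ref{Class}(i) finishes. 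If no vertex has degree $n-1$, pick any $u$ of degree $n-2$ and let $w$ be the unique vertex of $V(G)\setminus N[u]$; then for every $x\in N_G(u)$ we have $d_{H}(x)=d_G(x)-1\le n-3$, while $d_H(w)=d_G(w)\le n-2$. If $d_G(w)\le n-3$, Lemma \ref{Class}(i) again finishes.

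The genuinely delicate sub-case is $d_G(w)=n-2$, where a naive use of Lemma \ref{Class}(i) only gives $\textnormal{rd}(G)\le n-1$. Here I would pivot to Lemma \ref{Class}(ii): the subgraph $H_\Delta$ of $H$ induced by its maximum-degree vertices reduces to the single vertex $\{w\}$ (an isolated $K_1$, which is a tree and manifestly not a disjoint union of cycles), so Lemma \ref{rd-classs} gives that $H$ is in Class $1$; moreover the computation above shows $d_H(x)\le n-3=\Delta(H)-1$ for every $x\in N_G(u)$. Lemma \ref{Class}(ii) then yields $\textnormal{rd}(G)\le\Delta(H)=n-2$, a contradiction. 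The main obstacle is exactly this last sub-case, since it is where the stronger Class-$1$ hypothesis of Lemma \ref{Class}(ii) is forced into play; every other branch of the argument is a direct degree-counting application of Lemma \ref{Class}(i).
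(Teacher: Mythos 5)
This lemma is imported from \cite{CDHHZ} and the paper gives no proof of it, so there is nothing internal to compare against; your argument has to be judged on its own, and it checks out. The sufficiency half is exactly the standard one: the $n-2$ paths $u$-$w$-$v$ plus the edge $uv$ give $\lambda(u,v)=n-1$, hence $\mathrm{rd}(G)\ge\lambda^+(G)\ge n-1$ by Lemma \ref{rd-lem1}, while $\Delta(G-u)\le n-2$ and Lemma \ref{Class}(i) give the matching upper bound. The necessity half is also sound, and notably it reuses precisely the toolkit the paper itself deploys for the analogous characterization of $\mathrm{rd}(G)=n-2$ in Lemma \ref{rd-cha-n-2}: degree counting plus Lemma \ref{Class}(i), with Lemma \ref{rd-classs} feeding the Class-$1$ hypothesis of Lemma \ref{Class}(ii) in the one sub-case ($d_G(w)=n-2$) where $\Delta(G-u)$ stays at $n-2$; there $H_\Delta=\{w\}$ is indeed a single vertex because every $x\in N_G(u)$ drops to degree at most $n-3$ in $H$, so the application is legitimate. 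Two minor remarks. First, your appeal to Lemma \ref{rd-n-2} (from the companion paper \cite{BCHL}) is dispensable and arguably better avoided, since that result could in principle depend on the lemma you are proving: the existence of a vertex of degree $n-2$ in your Case B follows already from $\mathrm{rd}(G)\le\chi'(G)\le\Delta(G)+1$, which forces $\Delta(G)\ge n-2$. Second, in Case A for very small $n$ the graph $H=G-u^\ast$ may be disconnected, but Lemma \ref{Class}(i) as stated imposes no connectivity on $H$, so nothing breaks. Overall the proof is correct and self-contained modulo the quoted lemmas.
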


\begin{lem}\label{rd-cha-n-2}
Let $G$ be a graph with order $n$. Then
$\textnormal{rd}(G)=n-2$ if and only if one of the following conditions holds.

\textnormal{(i)} $G$ has only one vertex of degree $n-1$ and another  vertex of degree $n-2$.

\textnormal{(ii)} $\Delta(G)=n-2$ and there exist two nonadjacent vertices of degree $n-2$.

\textnormal{(iii)} $\Delta(G)=n-2$ and $G$ has an edge connecting any two vertices of degree $n-2$, and $G$ has a vertex $z$ such that $z\notin N(u)\cup N(v)$ for some pair of vertices $u,v$ of degree $n-2$ or two distinct vertices $x,y$ such that $x\in N(u)\setminus N[v]$ and $y\in N(v)\setminus N[u]$ for some pair vertices $u,v$ of degree $n-2$ and $x,y$ belong to a same component of $G[V\setminus \{u,v\}]$.
\end{lem}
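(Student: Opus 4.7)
The plan is to establish both directions, with the necessity direction in the sub-case where $\Delta(G) = n-2$ and all degree-$(n-2)$ vertices are pairwise adjacent being the main obstacle.

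\textbf{Sufficiency.} For each of (i)--(iii) I establish the matching bounds $n-2 \le \textnormal{rd}(G) \le n-2$. The lower bound reduces, via Lemma~\ref{rd-lem1}, to $\lambda^+(G) \ge n-2$, which I verify by exhibiting $n-2$ edge-disjoint $u$-$v$-paths in each case: in (i), between the universal vertex $u$ and a degree-$(n-2)$ vertex $v$, the edge $uv$ together with $u$-$w$-$v$ for $w \in N(v)\setminus\{u\}$; in (ii), between the two nonadjacent degree-$(n-2)$ vertices, the $n-2$ paths $u$-$w$-$v$ through each common neighbor; for the first clause of (iii), the edge $uv$ together with $u$-$w$-$v$ for $w \in V\setminus\{u,v,z\}$; and for the second clause of (iii), the edge $uv$, the $n-4$ paths through common neighbors, and one additional path $u$-$v^*$-$\cdots$-$u^*$-$v$ routed through a $v^*$-$u^*$ path inside their shared component of $G[V\setminus\{u,v\}]$. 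For the upper bound I apply Lemma~\ref{Class} to $H = G-u$: in (i), deleting the universal vertex forces $\Delta(H)\le n-3$ and Lemma~\ref{Class}(i) applies; in (ii), deleting a degree-$(n-2)$ vertex $u$ makes the other degree-$(n-2)$ vertex $v$ universal in $H$ while all remaining vertices of $H$ have degree $\le n-3$, so $H_\Delta=\{v\}$ is a tree, Lemma~\ref{rd-classs} puts $H$ in Class~1, and Lemma~\ref{Class}(ii) applies; in (iii), deleting a degree-$(n-2)$ vertex $u$ gives $\Delta(H)\le n-3$, since the unique non-neighbor $u^*$ of $u$ cannot belong to $U$ (members of $U$ are pairwise adjacent), hence $d(u^*)\le n-3$, and Lemma~\ref{Class}(i) applies.

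\textbf{Necessity.} Assume $\textnormal{rd}(G)=n-2$. Lemmas~\ref{rd-n-2} and~\ref{rd-lem3} give at least two vertices of degree $\ge n-2$ and at most one vertex of degree $n-1$. If exactly one vertex has degree $n-1$, conclusion (i) follows. Otherwise $\Delta(G)=n-2$ with at least two degree-$(n-2)$ vertices, forming a set $U$; if some pair in $U$ is nonadjacent, (ii) follows. The remaining case is when $U$ induces a clique, and here I must show (iii) holds.

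\textbf{Main obstacle.} In this remaining case I argue the contrapositive: if (iii) fails then $\textnormal{rd}(G)\le n-3$, contradicting the hypothesis. Failure of (iii) means that for every pair $u,v\in U$, $u^*\ne v^*$ and $u^*,v^*$ lie in distinct components of $G[V\setminus\{u,v\}]$. A first observation is that $\lambda^+(G)\le n-3$: for any such pair, taking $S=\{u\}\cup C_{v^*}$, where $C_{v^*}$ is the component of $G[V\setminus\{u,v\}]$ containing $v^*$, yields a $u$-$v$-cut of exactly $n-3$ edges, namely $uv$, the edges from $u$ to the other components of $G[V\setminus\{u,v\}]$ except to $u^*$, and the edges from $v$ to $C_{v^*}\setminus\{v^*\}$. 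To then construct an rd-coloring with $n-3$ colors, I properly edge-color $G[V\setminus U]$ using Vizing (its maximum degree is $\le n-4$, so $\chi'\le n-3$) and extend to the edges incident with $U$ so that (a) $E_w$ is rainbow for each $w\notin U$, and (b) for each pair $u,v\in U$ the specific $(n-3)$-edge cut described above is rainbow. Condition~(a) has slack because each $w\notin U$ has at most two edges to $U$; condition~(b) is arranged by placing, for each common neighbor $w$ of $u$ and $v$, the prescribed cut-color on $uw$ if $w\notin C_{v^*}$ and on $vw$ otherwise. The hardest point is to show that these local and global constraints can be realized simultaneously, particularly when $|U|\ge 3$ and several min-cuts must be made rainbow at once; I expect this to rely on the sparseness forced on $G[V\setminus U]$ by the different-component hypothesis.
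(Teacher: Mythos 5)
Your sufficiency direction and the structural reduction in the necessity direction are both sound (though the upper bound $\textnormal{rd}(G)\le n-2$ in all three cases follows in one line from Lemma~\ref{rd-lem3} together with $\textnormal{rd}(G)\le n-1$, without invoking Lemma~\ref{Class} case by case). The genuine gap is exactly where you flag it: the entire necessity direction hinges on showing that a graph in which all degree-$(n-2)$ vertices are pairwise adjacent, have pairwise distinct non-neighbors, and have those non-neighbors separated in $G[V\setminus\{u,v\}]$ for every pair, satisfies $\textnormal{rd}(G)\le n-3$. Your plan for this is a direct coloring construction in which each of the $\binom{|U|}{2}$ cuts of size exactly $n-3$ must receive all $n-3$ colors with no repetition while every $E_w$, $w\notin U$, stays rainbow; since each such cut has no slack at all, this is a rigid simultaneous constraint-satisfaction problem, and you offer no argument that it is solvable. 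As written, the proof is incomplete at its only hard step.

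The paper avoids this entirely with a degree observation you are missing: in the remaining case, \emph{every} vertex $a\notin\{u,v\}$ has $d_G(a)\le n-3$. If $G[V\setminus\{u,v\}]$ has at least three components, or two components each of size at least two, this is immediate since $a$ has at least two non-neighbors outside its own component (counting $u^*$ or $v^*$). In the case of two components with one a singleton $\{s\}$, a vertex $w$ of degree $n-2$ in the large component would have $s$ as its unique non-neighbor, hence would share its non-neighbor with whichever of $u,v$ has $s$ as non-neighbor --- producing a common non-neighbor $z$ for a pair of degree-$(n-2)$ vertices, i.e., the first clause of (iii), contrary to assumption. With this in hand one deletes $v$, gets $\Delta(G-v)\le n-3$, and applies the Class-$1$ machinery (Lemmas~\ref{rd-classs} and~\ref{Class}) to conclude $\textnormal{rd}(G)\le n-3$ with no bespoke coloring. (Even there one must be a little careful, since $u\in N_G(v)$ attains $\Delta(G-v)$, so the hypothesis of Lemma~\ref{Class}(ii) needs checking; but the degree observation is the key idea absent from your writeup, and without it or a completed version of your coloring scheme the necessity direction does not close.)
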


\begin{proof}
For any graph satisfying condition (i), (ii) or (iii), we first get that rd$(G)\leq n-2$ by Lemma \ref{rd-lem3}. Furthermore, we find that $\lambda^+(G)\geq n-2$, and so rd$(G)\geq n-2$ by Lemma \ref{rd-lem1}.

If rd$(G)=n-2$, then $G$ has at least
two vertices of degree at least $n-2$ by Lemma \ref{rd-n-2}.
Furthermore, $G$ does not have two vertices of degree $n-1$.
Therefore, in addition to the graphs satisfying condition (i), (ii) or (iii), the remaining graphs with rd$(G)=n-2$ satisfy the following two conditions:

(1) \ $\Delta(G)=n-2$ and $G$ has an edge connecting any two vertices of  degree $n-2$.

(2) \ $G$ has two distinct vertices $x,y$ such that $x\in N(u)\setminus N[v]$ and $y\in N(v)\setminus N[u]$ for any pair vertices $u,v$ of degree $n-2$, and $x,y$ belong to different component of $G[V\setminus \{u,v\}]$.

We will show that the rainbow disconnection numbers of the graphs satisfying conditions (1) and (2) are at most $n-3$.

If $G[V\setminus \{u,v\}]$ has at least three parts or two parts where each part has at least 2 vertices, then $d(a)\leq n-3$ for $a\in V(G)\setminus \{u,v\}$. We claim that if $G[V\setminus \{u,v\}]$ has two parts where one part has exactly one vertex, then $d(a)\leq n-3$ for $a\in V(G)\setminus \{u,v\}$. Assume that there exists a vertex $w$ of $V(G)\setminus \{u,v\}$ with $d(w)=n-2$. Then $w,v$ are two vertices of degree $n-2$, contradicting to the condition. Let $G'=G-v$. Then $d_{G'}(u)=n-3$ and $d_{G'}(a)\leq n-4$ for $a\in V(G')\setminus \{u\}$. Namely, the graph $G'$ is in Class 1 by Lemma \ref{rd-classs} and $d_{G'}(b)\leq n-3$ for $b\in N(v)$. Thus, we have that rd$(G)\leq n-3$ by Lemma \ref{Class}.
\end{proof}

In \cite{BCHL}, we obtained a Nordhaus-Gaddum type bounds for
rd$(G)$, and examples were given to show that
the upper and lower bounds are sharp. However, we are not satisfied
with these examples, since they are special graphs. We restate it as follows.

\begin{lem}{\upshape\cite{BCHL}}\label{rd-ng}
If $G$ is a connected graph such that $\overline{G}$ is also
connected, then $n-2\leq
\textnormal{rd}(G)+\textnormal{rd}(\overline{G})\leq 2n-5$ and
$n-3\leq \textnormal{rd}(G)\cdot \textnormal{rd}(\overline{G})\leq
(n-2)(n-3)$. Furthermore, these bounds are sharp.
\end{lem}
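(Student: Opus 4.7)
Our plan is to exploit the structural characterizations in Lemmas~\ref{rd-lem3} and~\ref{rd-cha-n-2} together with the basic inequality $\textnormal{rd}(H)\geq\lambda^+(H)$ from Lemma~\ref{rd-lem1}. For the upper bounds, we first observe that neither $\textnormal{rd}(G)$ nor $\textnormal{rd}(\overline{G})$ can reach $n-1$: by Lemma~\ref{rd-lem3}, $\textnormal{rd}(G)=n-1$ would force two vertices of degree $n-1$ in $G$, and such vertices would be isolated in $\overline{G}$, contradicting its connectedness; the symmetric statement holds for $\overline{G}$. Thus $\textnormal{rd}(G),\textnormal{rd}(\overline{G})\leq n-2$. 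To sharpen this to $\textnormal{rd}(G)+\textnormal{rd}(\overline{G})\leq 2n-5$, it suffices to rule out the simultaneous equality $\textnormal{rd}(G)=\textnormal{rd}(\overline{G})=n-2$. By Lemma~\ref{rd-cha-n-2} each side then falls in case~(i), (ii), or~(iii); case~(i) on either side is excluded by the same isolation argument, so both graphs satisfy case~(ii) or~(iii), which forces $\Delta(G)=\Delta(\overline{G})=n-2$ and $\delta(G)=\delta(\overline{G})=1$. A sub-case analysis, distinguishing whether the two vertices of degree $n-2$ in $G$ are adjacent and where the pendant vertices of $G$ (equivalently, the vertices of degree $n-2$ in $\overline{G}$) sit relative to them, produces a configuration violating the conditions of Lemma~\ref{rd-cha-n-2} applied to $\overline{G}$. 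The product bound $(n-2)(n-3)$ is then immediate, since at most one of the two rd-numbers equals $n-2$.

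For the lower bounds we split on whether $G$ is a tree. If $G$ is a tree, then $\textnormal{rd}(G)=1$; since $\overline{G}$ is connected, $G$ is not a star, so $G$ has two leaves $l_1,l_2$ with (possibly equal) tree-neighbors $p_1,p_2$. In $\overline{G}$ both $l_1$ and $l_2$ have degree $n-2$ and are adjacent, and combining the direct edge $l_1l_2$ with the length-two paths $l_1$-$x$-$l_2$ for every $x\notin\{l_1,l_2,p_1,p_2\}$ yields at least $n-3$ pairwise edge-disjoint $l_1$-$l_2$-paths in $\overline{G}$. Hence $\textnormal{rd}(\overline{G})\geq\lambda^+(\overline{G})\geq n-3$, and so $\textnormal{rd}(G)+\textnormal{rd}(\overline{G})\geq n-2$. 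If $G$ is not a tree but $\overline{G}$ is, we are done by symmetry. Otherwise both are non-trees, so $\textnormal{rd}(G),\textnormal{rd}(\overline{G})\geq 2$; if one of them still has a pendant vertex, the same length-two-path count works on the complement, while in the remaining regime $\delta(G),\delta(\overline{G})\geq 2$ and we pick a pair of high-degree vertices on the side of larger maximum degree and count edge-disjoint paths there, again reaching $n-2$. The product lower bound $n-3$ follows from the sum bound together with $\textnormal{rd}\geq 1$.

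Sharpness of the lower bounds is witnessed by $G=P_4$ (note $\overline{P_4}\cong P_4$): $\textnormal{rd}(P_4)+\textnormal{rd}(\overline{P_4})=1+1=n-2$ and $\textnormal{rd}(P_4)\cdot\textnormal{rd}(\overline{P_4})=1=n-3$. For the upper bounds one exhibits small graphs $G$ with $\textnormal{rd}(G)=n-2$ and $\textnormal{rd}(\overline{G})=n-3$, constructed directly from the explicit conditions in Lemma~\ref{rd-cha-n-2}. The step we expect to be the main obstacle is the sub-case analysis excluding $\textnormal{rd}(G)=\textnormal{rd}(\overline{G})=n-2$: cases~(ii) and~(iii) of Lemma~\ref{rd-cha-n-2} impose quite intricate adjacency and non-adjacency constraints, and tracking them simultaneously on both sides through all configurations of the degree-$(n-2)$ and pendant vertices is where the bulk of the work lies.
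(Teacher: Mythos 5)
A preliminary remark: this lemma is quoted from \cite{BCHL} and the present paper contains no proof of it, so there is nothing in-paper to compare against; I can only assess your argument on its own terms. Your overall skeleton (both rd-numbers are at most $n-2$; they cannot both equal $n-2$; handle the lower bound by cases on trees/pendant vertices) is plausible, but the two steps that carry all the weight are left as unexecuted sketches, and in both places a short complete argument is available from lemmas already quoted in this paper.

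First, the exclusion of $\textnormal{rd}(G)=\textnormal{rd}(\overline{G})=n-2$. You route this through the full characterization of Lemma \ref{rd-cha-n-2} and then defer the decisive step to ``a sub-case analysis \dots\ produces a configuration violating the conditions,'' which you yourself flag as the main obstacle; as written this is a claim that a proof exists, not a proof. It is also far more machinery than needed: if $\textnormal{rd}(G)=n-2$, then by Lemma \ref{rd-n-2} and the connectedness of $\overline{G}$ the graph $G$ has two vertices of degree exactly $n-2$, so $\overline{G}$ has two pendant vertices; every block of $\overline{G}$ is then either a pendant $K_2$ or misses both pendant vertices and hence has order at most $n-2$, so Lemma \ref{rd-block} together with $\textnormal{rd}(H)\leq |V(H)|-1$ gives $\textnormal{rd}(\overline{G})\leq n-3$ directly. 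Second, and more seriously, your lower-bound argument in the regime where neither graph is a tree and neither has a pendant vertex is not a proof: ``pick a pair of high-degree vertices on the side of larger maximum degree and count edge-disjoint paths there, again reaching $n-2$'' does not work as stated, since two high-degree vertices need not admit many edge-disjoint paths between them (they may share no common neighbours and be joined only through a sparse cut), and for large $n$ you need one of the two rd-numbers to be as large as $n-4$, which your sketch does not deliver. The clean route is Mader's bound, Lemma \ref{rd-lem2}: if $\lambda^+(G)\leq k$ then $|E(G)|\leq \frac{k+1}{2}(n-1)$, so from $|E(G)|+|E(\overline{G})|=\binom{n}{2}$ one gets $\lambda^+(G)+\lambda^+(\overline{G})\geq n-2$, whence $\textnormal{rd}(G)+\textnormal{rd}(\overline{G})\geq n-2$ by Lemma \ref{rd-lem1} with no case analysis at all; the product bound $n-3$ then follows from $x+y\geq n-2$ with $x,y\geq 1$. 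Your $P_4$ example for sharpness of the lower bounds is fine, but the sharpness of the upper bounds is again only asserted, not exhibited.
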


Next we will completely characterize the graphs which meet
the upper bounds in the above Nordhaus-Gaddum type results,
combining Lemma \ref{rd-cha-n-2}.

\begin{thm}
Let $G$ be a graph of order $n$. Then $\textnormal{rd}(G)+\textnormal{rd}(\overline{G})=2n-5$ $($or $\textnormal{rd}(G)\cdot \textnormal{rd}(\overline{G})=
(n-2)(n-3))$ if and only if one of $G$ and $\overline{G}$ satisfies the following three conditions:

\textnormal{(i)} condoition (ii) or (iii) in Lemma \ref{rd-cha-n-2} holds;

\textnormal{(ii)} it has exactly two vertices of degree $n-2$, say $u,v$;

\textnormal{(iii)} it has at least two vertices of degree 2
except $x,y$ or $z$, where $x\in N(u)\setminus N[v]$, $y\in N(v)\setminus N[u]$ and $z\notin N(u)\cup N(v)$.
\end{thm}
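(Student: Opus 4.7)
The plan is to reduce both equalities to the case $\{\textnormal{rd}(G),\textnormal{rd}(\overline{G})\}=\{n-2,n-3\}$ and then apply the characterization of Lemma \ref{rd-cha-n-2}. Because both $G$ and $\overline{G}$ are connected with $n\geq 4$, Lemma \ref{rd-lem3} forces $\textnormal{rd}(G),\textnormal{rd}(\overline{G})\leq n-2$: if $\textnormal{rd}(H)=n-1$, then $H$ has two vertices of degree $n-1$, which become isolated vertices in $\overline{H}$, contradicting connectivity. Hence $\textnormal{rd}(G)+\textnormal{rd}(\overline{G})=2n-5$ and $\textnormal{rd}(G)\cdot\textnormal{rd}(\overline{G})=(n-2)(n-3)$ are each equivalent to $\{\textnormal{rd}(G),\textnormal{rd}(\overline{G})\}=\{n-2,n-3\}$, and I may assume without loss of generality that $\textnormal{rd}(G)=n-2$ and $\textnormal{rd}(\overline{G})=n-3$.

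For the ``only if'' direction, I apply Lemma \ref{rd-cha-n-2} to $G$. Case (i) of that lemma supplies a vertex of degree $n-1$ in $G$, forcing an isolated vertex in $\overline{G}$, a contradiction. Case (ii) supplies two nonadjacent vertices $u,v$ of degree $n-2$; then $N_G(u)=N_G(v)=V(G)\setminus\{u,v\}$, so $\bar u,\bar v$ are mutually adjacent but have no other neighbors in $\overline{G}$, making $\{\bar u,\bar v\}$ a component of $\overline{G}$, again a contradiction. Thus $G$ satisfies case (iii) of Lemma \ref{rd-cha-n-2}, which is exactly condition (i) of the theorem. To force condition (ii), I argue that three pairwise-adjacent degree-$(n-2)$ vertices in $G$ would produce three mutually nonadjacent degree-$1$ vertices in $\overline{G}$; analyzing how these leaves attach (to a common anchor or to distinct ones) and applying Lemmas \ref{rd-n-2} and \ref{rd-cha-n-2} to $\overline{G}$ will exhibit two vertices of degree $\geq n-2$ in $\overline{G}$, giving $\textnormal{rd}(\overline{G})\geq n-2$, a contradiction. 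For condition (iii), I observe that vertices of $G$-degree $2$ correspond to $\overline{G}$-degree-$(n-3)$ vertices; if at most one such vertex exists outside the special set $\{x,y\}$ or $\{z\}$, I delete the anchor vertex ($\bar z$ in the $z$-subcase, or $\bar x$ or $\bar y$ in the $(x,y)$-subcase) and apply Lemma \ref{Class}(ii) together with Lemma \ref{rd-classs} to show $\textnormal{rd}(\overline{G})\leq n-4$, again a contradiction.

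For the ``if'' direction, conditions (i)--(iii) imply $\textnormal{rd}(G)=n-2$ directly by Lemma \ref{rd-cha-n-2}. I then show $\textnormal{rd}(\overline{G})=n-3$ via matching upper and lower bounds. The lower bound $\textnormal{rd}(\overline{G})\geq n-3$ follows from Lemma \ref{rd-lem1} once $\lambda^+(\overline{G})\geq n-3$ is verified; the two degree-$(n-3)$ vertices of $\overline{G}$ guaranteed by condition (iii) have pairwise edge-connectivity $n-3$, since each is incident with $n-3$ edges into the same connected core of $\overline{G}$. For $\textnormal{rd}(\overline{G})\leq n-3$, I delete the anchor vertex $\bar w$; condition (ii) (exactly two degree-$(n-2)$ vertices of $G$) ensures that the two leaves $\bar u,\bar v$ of $\overline{G}$ are adjacent only to $\bar w$, so $\overline{G}-\bar w$ has maximum degree at most $n-4$ and the degree structure around $\bar w$ satisfies the hypotheses of Lemma \ref{Class}(ii) after verifying $\overline{G}-\bar w$ is in Class 1 via Lemma \ref{rd-classs}, yielding $\textnormal{rd}(\overline{G})\leq n-3$.

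The main obstacle I anticipate is the subcase analysis required to rule out $G$ having three or more degree-$(n-2)$ vertices: the structure of $\overline{G}$ splits according to whether the unique non-neighbors of these vertices coincide or are distinct, and each configuration must be checked to produce two vertices of degree $\geq n-2$ in $\overline{G}$ so as to invoke Lemma \ref{rd-n-2}. A related subtlety is verifying that $\overline{G}-\bar w$ is in Class 1 via Lemma \ref{rd-classs}: this requires describing the $\Delta$-subgraph of $\overline{G}-\bar w$ as a disjoint union of trees and at most one unicyclic component that is not a union of cycles, which depends on the fine structure of $G$ in the two subcases of Lemma \ref{rd-cha-n-2}(iii).
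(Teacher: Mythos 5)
Your reduction to $\{\mathrm{rd}(G),\mathrm{rd}(\overline{G})\}=\{n-2,n-3\}$ and the appeal to Lemma \ref{rd-cha-n-2} match the paper, and your observation that case (ii) of that lemma cannot occur here (two nonadjacent degree-$(n-2)$ vertices make $\{\bar u,\bar v\}$ a $K_2$-component of $\overline{G}$) is correct and even sharpens the statement. But there is a genuine gap in the step forcing exactly two vertices of degree $n-2$. You claim that three pairwise adjacent degree-$(n-2)$ vertices of $G$, i.e.\ three leaves of $\overline{G}$, will ``exhibit two vertices of degree $\geq n-2$ in $\overline{G}$, giving $\mathrm{rd}(\overline{G})\geq n-2$.'' Both halves fail. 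A vertex of degree $n-2$ in $\overline{G}$ is a vertex of degree $1$ in $G$, and nothing forces $G$ to have even one such vertex: for $n=6$ take $a,b,c$ pairwise adjacent, each of degree $4$, with $d,e,f$ each of degree $2$; then $\overline{G}$ is a triangle with three pendant leaves, $\Delta(\overline{G})=3<n-2$, and $\mathrm{rd}(\overline{G})=2=n-4$. Moreover, even granting two vertices of degree $\geq n-2$, no lemma in the paper derives $\mathrm{rd}\geq n-2$ from a degree hypothesis --- Lemma \ref{rd-n-2} is the \emph{converse} implication. What actually works, and what the paper does, is a block argument: three leaves mean every block of $\overline{G}$ is either a pendant $K_2$ or has order at most $n-3$, so Lemmas \ref{rd-lem4} and \ref{rd-block} give $\mathrm{rd}(\overline{G})\leq n-4$, the desired contradiction.

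A second, smaller problem is your ``if'' direction. In the $(x,y)$-subcase of Lemma \ref{rd-cha-n-2}(iii) the two leaves $\bar u,\bar v$ of $\overline{G}$ hang from \emph{different} anchors ($\bar u$ from $\bar y$, $\bar v$ from $\bar x$), so ``delete the anchor vertex $\bar w$'' is not well defined; deleting one anchor still leaves the other leaf and a graph that can have maximum degree $n-3$, and the route through Lemma \ref{Class}(ii) and Lemma \ref{rd-classs} then needs a Class-1 verification you have only promised. The paper avoids all of this: since $\bar u,\bar v$ are leaves, Lemma \ref{rd-block} reduces $\mathrm{rd}(\overline{G})$ to the blocks inside $\overline{G}-\{\bar u,\bar v\}$, a graph of order $n-2$ with (by condition (iii)) at least two vertices of degree $n-3$, so Lemma \ref{rd-lem3} gives $\mathrm{rd}(\overline{G})=n-3$ exactly; the same block-plus-Lemma-\ref{rd-lem3} device disposes of the remaining ``only if'' subcases (at most one degree-$2$ vertex, or all degree-$2$ vertices among $x,y,z$). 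I recommend replacing your degree-counting and Class-1 machinery with this block decomposition throughout.
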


\begin{proof}
Without loss of generality, suppose that $G$ satisfies all above three conditions.
Obviously, rd$(G)=n-2$ by Lemma \ref{rd-cha-n-2}.
Since $G$ has at least two vertices of degree 2 except $x,y$ or $z$,
the graph $\overline{G}\setminus \{\bar{u},\bar{v}\}$ is of order $n-2$ and has at least two vertices of degree $n-3$.
So, rd$(\overline{G})=n-3$ by Lemmas \ref{rd-block} and \ref{rd-lem3}.

Conversely, we know that rd$(G)\leq n-1$ for any connected graph $G$.
Thus, for $\textnormal{rd}(G)+\textnormal{rd}(\overline{G})=2n-5$, by symmetry,
it remains to consider that rd$(G)=n-1$, rd$(\overline{G})=n-4$ and rd$(G)=n-2$, rd$(\overline{G})=n-3$.
Since $\overline{G}$ is connected, we only need to consider
that rd$(G)=n-2$, rd$(\overline{G})=n-3$ by Lemma \ref{rd-lem3}.
Similarly, for $\textnormal{rd}(G)\cdot \textnormal{rd}(\overline{G})=(n-2)(n-3)$,
by symmetry, we only need to consider that rd$(G)=n-2$, rd$(\overline{G})=n-3$.
Obviously, $G$ satisfies (ii) or (iii) of Lemma \ref{rd-cha-n-2}.
So, $G$ does not have any vertex of degree 1.
If $G$ has more than 2 vertices with degree $n-2$,
then $\overline{G}$ has at least 3 vertices with degree 1.
Then rd$(\overline{G})\leq n-4$
by Lemmas \ref{rd-lem4} and \ref{rd-block}.
Thus, condition (ii) holds.
Assume that $G$ has at most one vertex of degree $2$.
Then $\overline{G}$ only has at most one vertex of degree at least
$n-3$ since $G$ does not have any vertex of degree 1.
Moreover, $\overline{G}$ has two vertices $\bar{u},\bar{v}$ of degree 1.
Then rd$(\overline{G})\leq n-4$ by Lemmas \ref{rd-block} and \ref{rd-lem3}.
Assume that in any two vertices of degree 2 of $G$,
at least one of them is $x,y$ or $z$.
Since $\overline{G}$ has two vertices
of degree 1 and $\overline{G}\setminus \{\bar{u},\bar{v}\}$
has at most one vertex of degree $n-3$, rd$(\overline{G})\leq n-4$
by Lemmas \ref{rd-block} and \ref{rd-lem3}. Hence, condition (iii) holds.
\end{proof}

\section{Graphs with rd$(G)\leq \lambda^+(G)+1$}

At first, we recall some known results.

\begin{lem}{\upshape\cite{BCHL}}\label{rd-regu}
If $G$ is a connected $k$-regular graph, then $k \leq
\textnormal{rd}(G) \leq k+1$.
\end{lem}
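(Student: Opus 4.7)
The plan is to extract both bounds from the chain $\lambda^+(G)\le \textnormal{rd}(G)\le \chi'(G)\le \Delta(G)+1$ supplied by Lemma~\ref{rd-lem1}. For the upper bound, since $G$ is $k$-regular, $\Delta(G)=k$ and Vizing's theorem gives $\chi'(G)\le k+1$, whence $\textnormal{rd}(G)\le k+1$. Intuitively, any proper edge coloring of $G$ makes every star $E_v$ rainbow, and since $E_v$ is a $v$-$u$-cut for every $u\neq v$, it is automatically a rainbow disconnection coloring.

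For the lower bound, the same chain reduces the task to showing $\lambda^+(G)\ge k$. Each star cut $E_v$ has exactly $k$ edges, so $\lambda(u,v)\le k$ for every pair; hence it suffices to exhibit one pair with $\lambda(u,v)=k$. I would split on whether $G$ is $k$-edge-connected. In the affirmative case, $\lambda(G)=k$ combined with the star-cut upper bound forces $\lambda(u,v)=k$ for every pair, so $\lambda^+(G)=k$. In the negative case, some minimum edge cut $F$ has $|F|<k$ and partitions $V(G)$ into $V_1\cup V_2$ with $|V_1|,|V_2|\ge 2$ (since $|V_i|=1$ would force $|F|=k$). I then plan to descend inside $V_1$: the degree identity $2\,|E(G[V_1])|=k|V_1|-|F|$ guarantees $G[V_1]$ is dense enough to contain two vertices whose $k$ incident edges all lie in $V_1$, and I would exhibit $k$ edge-disjoint $G$-paths between them.

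The main obstacle lies in this last step. A natural route is induction on $|V(G)|$: contract $V_2$ to a single vertex $c$, use Lemma~\ref{rd-shrink} to transfer internal cuts bijectively between the contraction and $G$, and invoke the inductive hypothesis on the contracted graph. The delicate point is that contraction breaks $k$-regularity (the new vertex $c$ has degree $|F|<k$), so the inductive hypothesis must be framed in a slightly broader class that tolerates one low-degree ``exceptional'' vertex while preserving $k$-regularity elsewhere; an alternative would be a direct Menger-based construction of $k$ edge-disjoint paths between two $V_1$-internal vertices, using the fact that $G[V_1]$ locally ``looks'' $k$-regular away from the endpoints of $F$.
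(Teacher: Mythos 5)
This lemma is imported from \cite{BCHL}, so the paper at hand contains no proof to compare against; judging your argument on its own terms, the upper bound and the overall reduction are fine (rd$(G)\le\chi'(G)\le k+1$ by Vizing, and rd$(G)\ge\lambda^+(G)$ by Lemma \ref{rd-lem1}, so everything hinges on $\lambda^+(G)\ge k$, which is immediate when $\lambda(G)=k$). The gap is in the remaining case $\lambda(G)<k$. Your inductive mechanism runs through the contraction $H=G/V_2$ and appeals to Lemma \ref{rd-shrink} to ``transfer internal cuts bijectively,'' but that lemma is strictly one-directional: every $u$-$v$-edge-cut of $H$ is a $u$-$v$-edge-cut of $G$, which yields $\lambda_G(u,v)\le\lambda_H(u,v)$. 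A minimum $u$-$v$-cut of $G$ with $u,v\in V_1$ may use edges inside $V_2$ (which vanish under contraction) and need not survive as a cut of $H$, because a path of $H$ through the contracted vertex can be rerouted through $G[V_2]$. So even if your strengthened inductive hypothesis produced a pair with $\lambda_H(u,v)\ge k$, this says nothing about $\lambda_G(u,v)$; the inequality points the wrong way for your conclusion. The fallback you mention --- finding two vertices of $V_1$ all of whose $k$ incident edges lie in $V_1$ and then ``exhibiting'' $k$ edge-disjoint paths --- is a correct counting observation followed by a restatement of the goal, not a construction.

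What is actually needed is the classical fact $\lambda^+(G)\ge\delta(G)$ for every connected graph, which does not follow from degree counting alone. Two standard routes: (a) in a Gomory--Hu tree of $G$, a leaf $w$ with tree-neighbor $x$ satisfies $\lambda(w,x)=d(w)\ge\delta(G)$, since the fundamental cut of that tree edge is $(\{w\},V\setminus\{w\})$; or (b) an uncrossing argument: among all pairs and all minimum cuts realizing $\lambda^+(G)$, take one whose smaller side $A$ is minimal, and use submodularity of the cut function to show $|A|=1$, whence $\lambda^+(G)=d(A)\ge\delta(G)$. Either of these closes the lower bound; as written, your proposal does not.
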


\begin{lem}{\upshape\cite{BCHL}}\label{rd-thm1}
If $G=K_{n_1,n_2,...,n_k}$ is a complete $k$-partite graph of order $n$ where $k\geq 2$ and $n_1\leq n_2\leq \cdots \leq n_k$,
then
$$\textnormal{rd}(K_{n_1,n_2,...,n_k})=
\begin{cases}
n-n_2,& \text{if $n_1=1$},\\
n-n_1,& \text{if $n_1\geq 2$}.
\end{cases}$$
\end{lem}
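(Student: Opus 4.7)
My plan is to prove the lower and upper bounds separately. The lower bound will come for free from the inequality $\textnormal{rd}(G)\ge\lambda^+(G)$ of Lemma~\ref{rd-lem1} once $\lambda^+(G)$ has been pinned down, and the upper bound will follow from the vertex-deletion machinery of Lemma~\ref{Class} together with the Class~$1$ certificate of Lemma~\ref{rd-classs}.

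First I would compute $\lambda^+(G)$ exactly by applying Menger's theorem pair by pair. If $u,v$ lie in a common part $V_i$ of size $n_i$, then $E_u$ is a $u$-$v$-edge-cut of size $n-n_i$ while the $n-n_i$ length-two paths $u\text{-}w\text{-}v$ indexed by $w\in N(u)$ are pairwise edge-disjoint, so $\lambda(u,v)=n-n_i$. If $u\in V_i$ and $v\in V_j$ with $n_i\le n_j$ are in different parts, an analogous construction (the edge $uv$, length-two paths $u\text{-}w\text{-}v$ through each $w\in V(G)\setminus(V_i\cup V_j)$, plus $n_i-1$ length-three paths $u\text{-}w\text{-}w'\text{-}v$ via a matching between $V_j\setminus\{v\}$ and $V_i\setminus\{u\}$) produces $n-n_j$ pairwise edge-disjoint paths, so $\lambda(u,v)=n-n_j$. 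Taking the maximum over all pairs gives $\lambda^+(G)=n-n_2$ when $n_1=1$ (attained by any pair inside $V_2$) and $\lambda^+(G)=n-n_1$ when $n_1\ge 2$ (attained by any pair inside $V_1$), which together with Lemma~\ref{rd-lem1} yields the required lower bound on $\textnormal{rd}(G)$.

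For the upper bound when $n_1=1$, I would let $v_0$ be the unique vertex of $V_1$: then $G-v_0\cong K_{n_2,\ldots,n_k}$ has maximum degree $(n-1)-n_2$, and Lemma~\ref{Class}(i) immediately delivers $\textnormal{rd}(G)\le\Delta(G-v_0)+1=n-n_2$. When $n_1\ge 2$, I would pick any $v\in V_1$ and set $H=G-v\cong K_{n_1-1,n_2,\ldots,n_k}$. Since $n_1-1<n_2\le n_j$ for every $j\ge 2$, the maximum degree $\Delta(H)=n-n_1$ is attained exclusively by the $n_1-1$ remaining vertices of $V_1$, which are mutually nonadjacent in $H$. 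Hence $H_\Delta$ consists of isolated vertices, so Lemma~\ref{rd-classs} places $H$ in Class~$1$. Moreover, every $x\in N_G(v)=V(H)\setminus V_1$ lies in some $V_j$ with $j\ge 2$ and therefore satisfies $d_H(x)=n-1-n_j\le n-1-n_1=\Delta(H)-1$. Lemma~\ref{Class}(ii) then gives $\textnormal{rd}(G)\le\Delta(H)=n-n_1$, matching the lower bound.

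The technically delicate step is the exact determination of $\lambda^+(G)$, since it demands explicitly exhibiting the full family of edge-disjoint $u$-$v$-paths in the cross-part case (especially the length-three paths built from a matching between two parts). Once that is in hand, the upper bounds follow cleanly from the vertex-removal argument, with the only additional check being the independent-set structure of $H_\Delta$ that enables the invocation of Lemma~\ref{rd-classs}.
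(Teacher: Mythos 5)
This lemma is imported from \cite{BCHL} and stated without proof in the present paper, so there is no internal proof to compare against; I can only assess your argument on its own terms, and it checks out. Your determination of $\lambda^+$ is correct: for $u,v$ in a common part $V_i$ the fan of length-two paths through $N(u)=V(G)\setminus V_i$ matches the cut $E_u$ of size $n-n_i$, and for the cross-part case the count $1+(n-n_i-n_j)+(n_i-1)=n-n_j$ matches the cut $E_v$, so $\lambda^+(G)$ equals $n-n_2$ when $n_1=1$ and $n-n_1$ when $n_1\geq 2$, giving the lower bound via Lemma \ref{rd-lem1}. (In fact only the lower bound on $\lambda^+$ is needed, so the cross-part construction is only essential in the degenerate case $n_1=n_2=1$.) The upper bounds are also sound: for $n_1=1$, $\Delta(G-v_0)=(n-1)-n_2$ and Lemma \ref{Class}(i) gives $\textnormal{rd}(G)\leq n-n_2$; for $n_1\geq 2$, $H=K_{n_1-1,n_2,\ldots,n_k}$ is connected, $H_\Delta$ is exactly the independent set $V_1\setminus\{v\}$ because $n_1-1<n_2\leq n_j$, so Lemma \ref{rd-classs} puts $H$ in Class $1$, and every $x\in N_G(v)$ has $d_H(x)=(n-1)-n_j\leq \Delta(H)-1$, so Lemma \ref{Class}(ii) yields $\textnormal{rd}(G)\leq n-n_1$. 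This is a clean and complete derivation using only tools already quoted in the paper.
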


\begin{lem}{\upshape\cite{CDHHZ}}\label{rd-2}
The rainbow disconnection number of the grid graph $G_{m,n}$ is as follows.

\textnormal{(i)} For all $n\geq 2$, $\textnormal{rd}(G_{1,n})=\textnormal{rd}(P_n) = 1$.

\textnormal{(ii)} For all $n\geq 3$, $\textnormal{rd}(G_{2,n})=2$.

\textnormal{(iii)} For all $n\geq 4$, $\textnormal{rd}(G_{3,n})=3$.

\textnormal{(iv)} For all $4\geq m\geq n$, $\textnormal{rd}(G_{m,n})=4$.
\end{lem}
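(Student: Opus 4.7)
The plan is to prove each of the four parts separately by matching a lower bound on $\text{rd}(G_{m,n})$ from Lemma \ref{rd-lem1} with an upper bound coming from either an explicit edge-coloring or from the general inequality $\text{rd}(G)\le\chi'(G)$; since every grid graph is bipartite, König's edge-coloring theorem will give $\chi'(G_{m,n})=\Delta(G_{m,n})$ whenever we need it. Part (i) is immediate because $P_n$ is a tree: any edge of the unique $u$-$v$-path is a single-edge cut, which is rainbow under a $1$-coloring. For part (ii), the lower bound follows from Lemma \ref{rd-lem1} via $\lambda^+(G_{2,n})$, while the upper bound is obtained by displaying a concrete $2$-edge-coloring of the ladder $G_{2,n}$ and checking, for each type of vertex pair, that some $2$-edge cut is rainbow.

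For part (iii) the lower bound $\text{rd}(G_{3,n})\ge 3$ follows from $\lambda^+(G_{3,n})\ge 3$: by Proposition 1.1, two degree-$3$ vertices in the same interior column admit three edge-disjoint paths (the direct path through the middle row and two detours through the adjacent columns). The upper bound $\text{rd}(G_{3,n})\le 3$ is delicate because $\Delta(G_{3,n})=4$ at the middle-row interior vertices, so the general $\chi'$-bound only yields $4$. I would construct an explicit $3$-edge-coloring that simultaneously makes (a) every column cut $C_j=\{(1,j)(1,j+1),(2,j)(2,j+1),(3,j)(3,j+1)\}$ rainbow, handling pairs in different columns, and (b) the star $E_v$ rainbow at every vertex $v$ of degree at most $3$, handling pairs within a single column whenever at least one endpoint has degree $\le 3$. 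Pairs of two middle-row vertices $(2,j_1),(2,j_2)$ are then separated by any column cut between them. A natural candidate alternates two colors on the top and bottom rows by column parity, puts the third color on the middle-row horizontal edges, and assigns vertical colors to complete the stars; verifying (a) and (b) everywhere requires a short case split depending on column parity and on proximity to the boundary.

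For part (iv), assume $m\ge n\ge 4$, so $\Delta(G_{m,n})=4$. Since $G_{m,n}$ is bipartite, König's theorem gives $\chi'(G_{m,n})=4$, whence $\text{rd}(G_{m,n})\le 4$ by Lemma \ref{rd-lem1}. For the matching lower bound, pick two adjacent interior vertices $u,v$ of degree $4$; the direct edge $uv$ together with three edge-disjoint detours through the three $4$-cycles containing $uv$ (which exist once both dimensions are at least $4$) produces four edge-disjoint $u$-$v$-paths, so $\lambda(u,v)=4$ and hence $\lambda^+(G_{m,n})\ge 4$. The main obstacle is part (iii): one must beat the generic $\chi'$-bound by exactly one, and the $3$-coloring must reconcile the column-cut constraint with the star constraint while the middle-row interior vertices, whose stars can never be rainbow with fewer than $4$ colors, force the column cuts to carry the burden of separating those pairs.
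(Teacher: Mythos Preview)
The paper does not prove this lemma; it is quoted verbatim from \cite{CDHHZ} and invoked only as evidence that grid graphs satisfy $\textnormal{rd}(G)\le\lambda^+(G)+1$. There is therefore no proof in the present paper against which to compare your attempt.

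One remark on your sketch is nevertheless worth making. Your lower-bound step in (ii) via $\lambda^+$ actually proves too much under the convention $G_{m,n}=P_m\,\square\,P_n$ implicit in (i): for $n\ge 3$ the two endpoints of any interior rung of the ladder $G_{2,n}$ are joined by three pairwise edge-disjoint paths (the rung itself together with the two detours through the neighbouring rungs), so $\lambda^+(G_{2,n})\ge 3$ and hence $\textnormal{rd}(G_{2,n})\ge 3$ by Lemma~\ref{rd-lem1}, contradicting the asserted value $2$. The same conclusion follows from the characterisation of graphs with $\textnormal{rd}=2$ stated later in the paper, since $G_{2,n}$ is $2$-connected but not a cycle when $n\ge 3$. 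Together with the visibly garbled hypothesis ``$4\ge m\ge n$'' in (iv), this points to a transcription slip in the statement as reproduced here rather than a flaw in your reasoning; you should consult \cite{CDHHZ} for the intended formulation before investing effort in parts (ii) and (iii).
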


Observe that rd$(G)\leq \lambda^+(G)+1$ for all connected regular graphs, complete multipartite graphs and grid graphs.
Therefore, we propose the following conjecture.

\begin{conj}\label{rd-conj}
Let $G$ be a connected graph with upper edge-connectivity $\lambda^+(G)$. Then $\lambda^+(G) \leq \textnormal{rd}(G) \leq \lambda^+(G)+1$.
\end{conj}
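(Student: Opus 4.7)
The lower bound $\lambda^+(G)\leq \textnormal{rd}(G)$ is already contained in Lemma \ref{rd-lem1}, so the content of Conjecture \ref{rd-conj} lies entirely in the upper bound $\textnormal{rd}(G)\leq \lambda^+(G)+1$. Because the shrinking argument of Theorem \ref{good bound} only yields the weaker bound $\lfloor \tfrac{3}{2}\lambda^+(G)\rfloor$, I do not plan to attack the conjecture in full generality; the aim is rather to verify it for several natural graph families, thereby producing supporting evidence.

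The first three families come essentially for free from results already recorded. For a connected $k$-regular graph $G$, one first checks $\lambda^+(G)=k$: the cut $E_v$ at any vertex gives $\lambda^+(G)\leq k$, while equality follows from a Gomory-Hu tree argument (each leaf $v$ of the tree contributes a tree-edge of weight $d(v)=k$). Lemma \ref{rd-regu} then yields $\textnormal{rd}(G)\leq k+1=\lambda^+(G)+1$. For a complete multipartite graph $G=K_{n_1,\ldots,n_k}$ with $n_1\leq \cdots\leq n_k$, routing length-$2$ paths between two vertices of the smallest part gives $\lambda^+(G)=n-n_1$, and Lemma \ref{rd-thm1} then delivers $\textnormal{rd}(G)\leq n-n_1=\lambda^+(G)$. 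Finally, for grid graphs $G_{m,n}$, Lemma \ref{rd-2} records the possible values of $\textnormal{rd}$, and a short case check of the maximum edge-connectivity in each of the four parameter ranges confirms $\textnormal{rd}(G_{m,n})\leq \lambda^+(G_{m,n})+1$.

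A more substantive case, announced in the introduction, is the characterization of those $k$-regular $k$-edge-connected graphs $G$ with $k$ odd for which $\textnormal{rd}(G)=k$ (as opposed to $k+1$, the other value permitted by Lemma \ref{rd-regu}). My plan uses that in such a $G$ every minimum edge-cut has size exactly $k$, so in particular every $E_v$ is a minimum cut; any rd-coloring with $k$ colors must therefore distribute those $k$ colors over every minimum cut that it designates as a rainbow separator, and a parity argument (available because $k$ is odd and consequently every minimum cut $[X,\bar{X}]$ has $|X|$ odd) translates this requirement into a condition on the proper $k$-edge-colorings of $G$. The anticipated conclusion is that $\textnormal{rd}(G)=k$ holds exactly when $G$ is Class 1, i.e., admits a $1$-factorization.

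The main obstacle toward the general conjecture is the gap between the shrinking approach of Section 2 and the desired bound. Even after the shrinking operation of Theorem \ref{good bound} reduces $G$ to pieces each having at most one vertex of degree greater than $\lambda^+(G)$, inside each piece one still needs a rainbow disconnection coloring using only $\lambda^+(G)+1$ colors; Vizing's theorem applied to the reduced graph only produces $\Delta+1$ colors, and $\Delta$ can be arbitrarily larger than $\lambda^+(G)$. Bridging this gap appears to demand a ``cut-version'' of Vizing's theorem, producing an edge-coloring with $\lambda^+(G)+1$ colors under which every minimum edge-cut of $G$ is rainbow; this, I believe, is the central difficulty of the conjecture.
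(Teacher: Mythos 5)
This statement is a conjecture that the paper itself does not prove; like you, it only notes the lower bound via Lemma \ref{rd-lem1} and then supports the upper bound on particular families, and your choice of families (regular graphs via Lemma \ref{rd-regu}, complete multipartite graphs via Lemma \ref{rd-thm1}, grid graphs, and the odd-$k$ regular characterization by a parity argument on the colors at an improper vertex) coincides with the paper's evidence, which additionally treats $\Delta(G)\le 3$, $\Delta(G)\ge n-3$, orders $n\le 7$, and subgraph-overfull graphs. One small slip: for $K_{n_1,\ldots,n_k}$ with $n_1=1$ there are no ``two vertices of the smallest part'' and $\lambda^+(G)$ equals $n-n_2$ rather than $n-n_1$, but the conclusion $\textnormal{rd}(G)=\lambda^+(G)$ still holds there, so the verification goes through.
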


Obviously, the lower bound is always true by Lemma \ref{rd-lem1}.
Furthermore, we give some classes of graphs that support the upper bound of the conjecture. The following are some useful lemmas which will be used in the sequel.

\begin{lem}{\upshape\cite{CDHHZ}}\label{rd-tree}
Let $G$ be a nontrivial connected graph. Then $\textnormal{rd}(G)=1$ if and only if $G$ is a tree.
\end{lem}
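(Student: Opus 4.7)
The plan is to verify both implications directly from the definition of the rainbow disconnection number and the interpretation of $\lambda(u,v)$ given in the Proposition cited from \cite{EFS, FF}.

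For the easy direction, I assume $G$ is a tree. Every edge of a tree is a bridge, so for any two distinct vertices $u$ and $v$, if $e$ is any edge lying on the unique $u$-$v$-path, then $\{e\}$ is already a $u$-$v$-edge-cut. Coloring all edges of $G$ with a single color then trivially makes $\{e\}$ a rainbow $u$-$v$-cut (a set with one edge is vacuously rainbow), so $\textnormal{rd}(G)\le 1$; since $G$ has an edge, $\textnormal{rd}(G)\ge 1$ and equality holds.

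For the converse, I assume $\textnormal{rd}(G)=1$ and fix a rainbow disconnection coloring using only one color. Any rainbow-cut in this coloring must consist of a single edge, because two edges would necessarily share the unique color. Hence for every pair $u,v\in V(G)$ there is a single edge $e$ separating $u$ from $v$, so $\lambda(u,v)=1$ for all pairs, and therefore $\lambda^+(G)=1$. (Alternatively, this follows directly from Lemma \ref{rd-lem1}, which gives $\lambda^+(G)\le \textnormal{rd}(G)=1$.) Applying the Proposition of Elias--Feinstein--Shannon and Ford--Fulkerson quoted earlier, no two vertices of $G$ are joined by two edge-disjoint paths. If $G$ contained a cycle $C$, then any two vertices $u,v$ on $C$ would be joined by two edge-disjoint paths (the two arcs of $C$), contradicting $\lambda^+(G)=1$. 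Hence $G$ is acyclic, and being connected it is a tree.

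There is essentially no hard step here: each direction reduces to a one-line observation once the definitions and the edge-version of Menger's theorem are in hand. The only point that needs to be stated cleanly is the equivalence ``one color available'' $\Longleftrightarrow$ ``every rainbow-cut has cardinality at most one'', which is what forces $\lambda^+(G)\le 1$ in the nontrivial direction.
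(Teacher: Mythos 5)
Your proof is correct. Note that the paper itself gives no proof of this lemma --- it is quoted from the reference \cite{CDHHZ} --- so there is nothing internal to compare against; your argument (single color forces every rainbow-cut to be a single edge, hence $\lambda^+(G)\le 1$ via Lemma \ref{rd-lem1}, hence acyclicity by the edge version of Menger's theorem, together with the trivial bridge argument for trees) is the standard and complete way to establish it.
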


\begin{lem}{\upshape\cite{CDHHZ}}\label{rd-2}
Let $G$ be a nontrivial connected graph. Then $\textnormal{rd}(G)=2$ if and only if each block of $G$ is either $K_2$ or a cycle and at least one block of $G$ is a cycle.
\end{lem}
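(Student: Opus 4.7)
The plan is to analyze $\textnormal{rd}(G)$ block by block using Lemma~\ref{rd-block}, which reduces the statement to understanding $\textnormal{rd}(B)$ for each block $B$: explicitly, $\textnormal{rd}(G)=2$ iff no block has $\textnormal{rd}>2$ while at least one block has $\textnormal{rd}=2$. Combined with Lemma~\ref{rd-tree}, this separates the analysis into three cases for a block: $K_2$, a cycle, or a 2-connected non-cycle.

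For the ``if'' direction, suppose each block of $G$ is a $K_2$ or a cycle, with at least one cycle block. A $K_2$ block is a tree, so has $\textnormal{rd}=1$ by Lemma~\ref{rd-tree}. For a cycle block $C_n$, I would color a single edge with color $1$ and the remaining edges with color $2$: for any two vertices $u,v$ on the cycle, the color-$1$ edge lies on exactly one of the two $u$-$v$ arcs, and pairing it with any edge on the opposite arc yields a rainbow 2-edge cut separating $u$ and $v$. Hence every block has $\textnormal{rd}\leq 2$, the presence of a cycle block forces $\textnormal{rd}\geq 2$, and Lemma~\ref{rd-block} then gives $\textnormal{rd}(G)=2$.

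For the ``only if'' direction, assume $\textnormal{rd}(G)=2$. Lemma~\ref{rd-tree} excludes $G$ being a tree, so some block is not $K_2$. Lemmas~\ref{rd-block} and~\ref{rd-lem1} together imply $\lambda^+(B)\leq \textnormal{rd}(B)\leq 2$ for every block $B$. The pivotal claim is: any 2-connected block $B$ (so $|V(B)|\geq 3$) that is not a cycle satisfies $\lambda^+(B)\geq 3$. Granting this, every non-$K_2$ block must be a cycle, and at least one such cycle block exists by the first observation.

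The main obstacle, as I expect it, is proving the pivotal claim. I would invoke Whitney's open ear decomposition of a 2-connected graph: $B = C_0 \cup P_1 \cup \cdots \cup P_k$, where $C_0$ is a cycle and each $P_i$ is a path whose distinct endpoints already lie in the preceding subgraph. Since $B$ is not a cycle, $k\geq 1$; let $u,v$ be the endpoints of $P_1$ on $C_0$. Then the two arcs of $C_0$ between $u$ and $v$, together with $P_1$, constitute three pairwise edge-disjoint $u$-$v$-paths, so $\lambda(u,v)\geq 3$ and hence $\lambda^+(B)\geq 3$, contradicting $\lambda^+(B)\leq 2$. This closes the argument.
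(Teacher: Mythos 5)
Your proof is correct. Note, however, that the paper contains no proof of this lemma to compare against: it is quoted verbatim from \cite{CDHHZ} (Chartrand et al.), so the only basis for evaluation is the argument's own soundness. On that score everything checks out: the reduction to blocks via Lemma~\ref{rd-block}, the one-special-edge coloring showing $\textnormal{rd}(C_n)=2$ (any minimal cut of a cycle separating $u$ and $v$ takes one edge from each arc, and you can always choose the unique color-$1$ edge together with a color-$2$ edge from the opposite arc), and the pivotal claim that a $2$-connected non-cycle block has $\lambda^+\geq 3$. The ear-decomposition step is valid since a $2$-connected graph is a cycle if and only if its ear decomposition has no ears beyond $C_0$, and the first ear $P_1$ (even if it is a single chord) is edge-disjoint from both arcs of $C_0$, giving three pairwise edge-disjoint $u$-$v$-paths and hence $\lambda(u,v)\geq 3$ by the Menger-type Proposition~1.1, contradicting $\lambda^+(B)\leq \textnormal{rd}(B)\leq 2$.
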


\begin{lem}{\upshape\cite{M}}\label{rd-lem2}
Let $G$ be a graph of order $n$ $(n\geq k+2 \geq 3)$. If
$|E(G)|>\frac{k+1}{2}(n-1)-\frac{1}{2}\sigma_k(G),$ where
$\sigma_k(G)=\sum\limits_{\mbox{\tiny $\begin{array}{c}
             x\in V(G) \\
             d(x)\le k \end{array}$}}(k-d(x))$, then
$\lambda^+(G)\geq k+1.$
\end{lem}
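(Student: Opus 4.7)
The first thing I would do is rewrite the target inequality in a cleaner form. Using $2|E(G)|=\sum_v d(v)$ and splitting $V(G)$ into $L=\{v:d(v)\le k\}$ and $H=V(G)\setminus L$, a direct rearrangement shows that
\[
|E(G)|>\tfrac{k+1}{2}(n-1)-\tfrac12\sigma_k(G) \;\Longleftrightarrow\; \sum_{v\in H}(d(v)-k)>n-k-1.
\]
So the lemma is equivalent to the contrapositive statement: if $\lambda^+(G)\le k$, then $\sum_{v\in H}(d(v)-k)\le n-k-1$. This form is appealing because it isolates exactly the ``excess degree'' concentrated at high-degree vertices, and I would prove it.

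My plan is an induction on $n$, with base case $n=k+2$. There every vertex has degree at most $k+1$, so the target reduces to $|H|\le 1$; if two vertices $u,v$ both had degree $k+1=n-1$, then the direct edge $uv$ together with the $n-2=k$ detours $uwv$ through the remaining vertices $w$ would furnish $k+1$ pairwise edge-disjoint $u$-$v$-paths, giving $\lambda^+(G)\ge\lambda(u,v)\ge k+1$ by Proposition~1, a contradiction. For the inductive step the tool of choice is the Gomory--Hu tree $T$ of $G$: each edge $e=xy$ of $T$ has weight $w(e)=\lambda_G(x,y)\le k$ equal to the size of a corresponding minimum cut in $G$, and at a leaf $v$ of $T$ the associated cut is precisely the set of $G$-edges incident to $v$, so every leaf $v$ of $T$ satisfies $d_G(v)\le k$ and hence automatically lies in $L$. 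I would peel off such a leaf $v$, apply the inductive hypothesis to $G-v$ (which still has $\lambda^+(G-v)\le\lambda^+(G)\le k$, since deleting a vertex cannot raise any local edge-connectivity), obtaining $\sum_{u\in H(G-v)}(d_{G-v}(u)-k)\le (n-1)-k-1$, and then recover the bound for $G$ by comparing the excesses vertex by vertex.

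The main obstacle I anticipate is precisely this last accounting step. Re-inserting $v$ raises each neighbour's degree by $1$, which can both lift a neighbour of $G-v$-degree $k$ into $H(G)$ and add $1$ to the excess of any neighbour already in $H(G-v)$, contributing altogether up to $|H(G)\cap N(v)|$ to the excess sum. The inductive slack of $+1$ (from $n-k-2$ up to $n-k-1$) therefore only suffices when $|H(G)\cap N(v)|\le 1$, so the crucial auxiliary claim is that some leaf $v$ of $T$ can always be chosen with at most one high-degree $G$-neighbour. To prove this, I would invoke the Gomory--Hu identity $\sum_{e\in E(T)}w(e)=\sum_{xy\in E(G)}d_T(x,y)\ge|E(G)|$ to conclude that if every leaf of $T$ has many high-degree neighbours then $G$ is in fact so sparse that the statement is trivially vacuous. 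As a fallback, if the leaf-based induction resists sharpening, I would instead split $G$ along the cut associated to the deepest edge of $T$, apply induction to each side, and glue the estimates, using the $(k+1)$-versus-$k$ slack in the target to reabsorb the at most $k$ cross-edges.
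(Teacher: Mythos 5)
The paper offers no proof of this lemma: it is quoted verbatim from Mader's 1973 paper \cite{M} and used as a black box, so there is no in-paper argument to compare yours against; I can only assess your proposal on its own terms.

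Your reformulation is correct (with $H=\{v: d(v)\ge k+1\}$ the hypothesis is exactly $\sum_{v\in H}(d(v)-k)>n-k-1$), the base case $n=k+2$ is correct, the observation that every Gomory--Hu leaf $v$ satisfies $d_G(v)\le k$ is correct, and your accounting of the inductive step is exactly right: the excess sum grows by precisely $|H(G)\cap N(v)|$ when $v$ is restored. But this correctly isolates the one claim on which everything hinges --- the existence of a Gomory--Hu leaf with at most one neighbour in $H$ --- and that claim is not proved; the two routes you sketch for it both fail quantitatively. The identity $\sum_{e\in E(T)}w(e)=\sum_{xy\in E(G)}d_T(x,y)\ge |E(G)|$, combined with $w(e)\le k$ on the $n-1$ tree edges, yields only $|E(G)|\le k(n-1)$, which is \emph{weaker} than the target bound $\tfrac{k+1}{2}(n-1)-\tfrac12\sigma_k(G)\le\tfrac{k+1}{2}(n-1)$ for every $k\ge 1$; so no regime of the problem becomes ``trivially vacuous'' this way. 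Worse, the target is attained with equality (every tree for $k=1$, and suitable cacti for $k=2$), so no crude counting argument has room to absorb a loss of even one unit in the inductive step.

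The fallback has the same difficulty. If you delete a cut $F$ with $|F|\le k$ and apply induction to the two sides, the two bounds sum to $n-2k-2$, a slack of $k+1$ below the target $n-k-1$; but reinserting the cross edges raises endpoint degrees by $1$ each and hence can raise the excess sum by up to $2|F|\le 2k$, which exceeds $k+1$ for all $k\ge 2$. If instead you contract one side (to preserve degrees on the other, as in the paper's shrinking operation), local edge-connectivities can strictly increase under contraction, so the hypothesis $\lambda^+\le k$ need not survive and the induction hypothesis need not apply. Either variant needs a genuinely new idea --- a more careful choice of the cut, or an uncrossing argument --- which is precisely the content of Mader's actual proof. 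As it stands, the proposal is a plausible plan with a correctly located but unfilled gap, not a proof.
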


\begin{lem}\label{D}
Let $G$ be a connected graph with $\lambda^+(G)=\Delta(G)$. Then $\textnormal{rd}(G)\leq \lambda^+(G)+1$.
\end{lem}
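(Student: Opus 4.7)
The plan is to simply chain together Vizing's theorem with the inequality chain already recorded in Lemma \ref{rd-lem1}. Recall that Lemma \ref{rd-lem1} states
$$\lambda(G) \leq \lambda^+(G) \leq \textnormal{rd}(G) \leq \chi'(G) \leq \Delta(G)+1,$$
where the last inequality is Vizing's theorem. The hypothesis $\lambda^+(G)=\Delta(G)$ then forces the entire chain to collapse to a one-step bound.

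Concretely, I would begin the proof by invoking Vizing's theorem to get a proper edge-coloring of $G$ with at most $\Delta(G)+1$ colors; such a coloring is automatically a rainbow disconnection coloring since for any two vertices $u,v$ the edge set $E_u$ incident to $u$ is already rainbow and separates $u$ from $v$. This gives $\textnormal{rd}(G) \leq \chi'(G) \leq \Delta(G)+1$. Substituting the hypothesis $\Delta(G)=\lambda^+(G)$ immediately yields $\textnormal{rd}(G) \leq \lambda^+(G)+1$, as required.

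There is essentially no obstacle here; the content of the lemma is really just the observation that when $\lambda^+(G)$ already attains its maximum possible value $\Delta(G)$, Vizing's theorem suffices to close the gap in Conjecture \ref{rd-conj}. The proof can be written in two or three sentences, with the only invoked results being Vizing's theorem and Lemma \ref{rd-lem1}.
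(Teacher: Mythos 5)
Your proposal is correct and is essentially identical to the paper's own proof, which likewise just writes $\textnormal{rd}(G)\leq \chi'(G)\leq \Delta(G)+1=\lambda^+(G)+1$ using Lemma \ref{rd-lem1} and the hypothesis. The extra sentence explaining why a proper edge-coloring is a rainbow disconnection coloring (via the rainbow cut $E_u$) is a harmless elaboration of the inequality $\textnormal{rd}(G)\leq\chi'(G)$ already recorded in that lemma.
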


\begin{proof}
It is easy to find that rd$(G)\leq \chi'(G)\leq \Delta(G)+1 =\lambda^+(G)+1$.
\end{proof}

For graphs with small maximum degrees we have the following result.

\begin{thm}\label{3}
Let $G$ be a graph with $\Delta(G)\leq 3$. Then $\textnormal{rd}(G)\leq \lambda^+(G)+1$.
\end{thm}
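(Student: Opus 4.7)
The plan is to split into cases according to the value of $\lambda^+(G)$, exploiting $\lambda^+(G)\le\Delta(G)\le 3$ together with the lemmas already at hand.

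First, whenever $\lambda^+(G)=\Delta(G)$, Lemma \ref{D} immediately yields $\textnormal{rd}(G)\le\lambda^+(G)+1$. This already disposes of every graph with $\lambda^+(G)=3$ (which forces $\Delta(G)=3$), as well as paths and cycles where $\lambda^+(G)=\Delta(G)\in\{1,2\}$. So I may assume $\lambda^+(G)\le 2$.

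If $\lambda^+(G)=1$, then any two vertices of $G$ can be separated by deleting a single edge, so $G$ is a tree, and Lemma \ref{rd-tree} gives $\textnormal{rd}(G)=1\le 2=\lambda^+(G)+1$. If $\lambda^+(G)=2$, the key claim is that every block of $G$ is either $K_2$ or a cycle. Assuming this, Lemma \ref{rd-2} yields $\textnormal{rd}(G)\le 2\le 3=\lambda^+(G)+1$, which finishes the argument.

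To prove the claim, suppose for contradiction that some block $B$ of $G$ is $2$-connected and not a cycle. Then $B$ admits an open ear decomposition starting from a cycle $C$ to which at least one further ear $P$ is attached; let $u,v\in V(C)$ be the endpoints of $P$. The two arcs of $C$ joining $u$ to $v$, together with $P$ itself, form three pairwise edge-disjoint $u$-$v$-paths in $B$, and hence in $G$, so $\lambda_G(u,v)\ge 3$, contradicting $\lambda^+(G)=2$. The main (and essentially only non-routine) obstacle is precisely this block-structure step, where a global edge-connectivity hypothesis has to be turned into a local statement about the blocks; the ear-decomposition argument is the clean way to do this, and the remaining pieces are immediate applications of Lemmas \ref{D}, \ref{rd-tree}, and \ref{rd-2}.
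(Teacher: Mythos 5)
Your proposal is correct and follows essentially the same route as the paper: a case split on $\lambda^+(G)\in\{1,2,3\}$, handling $\lambda^+(G)=1$ via Lemma \ref{rd-tree}, $\lambda^+(G)=2$ via the block structure and Lemma \ref{rd-2}, and $\lambda^+(G)=3$ via Vizing's bound. Your ear-decomposition argument for the claim that every block is $K_2$ or a cycle is in fact a more rigorous justification than the paper's terse assertion that chordless cycles force $G$ to be a cactus.
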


\begin{proof}
Obviously, $\lambda^+(G)\leq 3$.
If $\lambda^+(G)=1$, we get that $G$ is a tree. It follows from Lemma \ref{rd-tree} that rd$(G)=1=\lambda^+(G)$.
If $\lambda^+(G)=2$, $G$ must contain a cycle and any cycle of $G$ does not have a chord. Thus, $G$ is a cactus graph (i.e., each block of $G$ is a cycle or $K_2$ and at least one block of $G$ is a cycle). It follows from Lemma \ref{rd-2} that rd$(G)=2\leq \lambda^+(G)+1$.
If $\lambda^+(G)=3$, we have that rd$(G)\leq \chi'(G)\leq \Delta(G)+1=4=\lambda^+(G)+1$.
\end{proof}

For graphs with large maximum degrees we have the following result.

\begin{thm}\label{n-3}
Let $G$ be a graph with $\Delta(G)\geq n-3$. Then $\textnormal{rd}(G)\leq \lambda^+(G)+1$.
\end{thm}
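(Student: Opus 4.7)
The plan is to split on the value of $\Delta=\Delta(G)\in\{n-1,n-2,n-3\}$. In each case, I first bound $\textnormal{rd}(G)$ from above using the characterisations already proved (Lemmas \ref{rd-lem3}, \ref{rd-cha-n-2}, \ref{rd-n-2}) together with Lemma \ref{Class}, and then lower-bound $\lambda^+(G)$ via an explicit Menger-type edge-disjoint path count, which is made possible by the rich common neighbourhoods forced by $\Delta$ being close to $n$.

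For $\Delta=n-1$ with $v$ universal: if a second vertex is also universal, Lemma \ref{rd-lem3} gives $\textnormal{rd}(G)=n-1$, and the direct edge plus the length-$2$ paths through each of the $n-2$ remaining vertices show $\lambda^+(G)\ge n-1$; otherwise $\Delta(G-v)=\Delta_2(G)-1$, so Lemma \ref{Class}(i) gives $\textnormal{rd}(G)\le\Delta_2(G)$, while the same length-$\le 2$ construction at $v$ yields $\lambda(v,w)=d(w)$ for every $w\neq v$, so $\lambda^+(G)=\Delta_2(G)$. For $\Delta=n-2$ with $v$ of degree $n-2$ and unique non-neighbour $z$, Lemma \ref{rd-n-2} gives $\textnormal{rd}(G)\le n-2$; if $d(z)=n-2$ the $n-2$ common neighbours of $v$ and $z$ yield $\lambda^+(G)\ge n-2$, and in each remaining subcase of Lemma \ref{rd-cha-n-2} the analogous count between two degree-$(n-2)$ vertices gives $\lambda^+(G)\ge n-3$, while when no such pair exists Lemma \ref{rd-cha-n-2} forces $\textnormal{rd}(G)\le n-3$ and a weaker bound on $\lambda^+(G)$ suffices.

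For $\Delta=n-3$ with non-neighbours $z_1, z_2$ of $v$, Lemma \ref{rd-n-2} gives $\textnormal{rd}(G)\le n-3$, so one must show $\lambda^+(G)\ge n-4$. If some $z_i$ or some $w\in N(v)$ has degree $n-3$, an edge-disjoint-path count between $v$ and that vertex---using its at least $n-5$ common neighbours with $v$ inside $N(v)$, augmented when needed by a length-$3$ detour through the other of $z_1, z_2$---produces $\lambda(v,\cdot)\ge n-4$. In the sparser residual configurations I would apply Lemma \ref{Class}(ii) to $H=G-v$: the maximum-degree vertices of $H$ lie in $\{z_1,z_2\}$, so they induce a subgraph on at most two vertices---a tree---and Lemma \ref{rd-classs} then makes $H$ Class $1$, yielding $\textnormal{rd}(G)\le\Delta(H)$, which matches the desired bound. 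The main obstacle is precisely this final case $\Delta=n-3$, where the many sub-configurations according to the degrees of $z_1,z_2$ and their adjacencies among themselves and to high-degree vertices of $N(v)$ each require a small ad hoc path count, and the sparsest instances only yield the conclusion after carefully combining Lemmas \ref{Class}(ii) and \ref{rd-classs}.
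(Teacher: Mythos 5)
Your overall strategy differs from the paper's: you case-split on $\Delta\in\{n-1,n-2,n-3\}$ and try to pin down $\textnormal{rd}(G)$ via the exact-value characterizations (Lemmas \ref{rd-lem3}, \ref{rd-cha-n-2}) while lower-bounding $\lambda^+(G)$ by an absolute function of $n$; the paper instead sets $k=\lambda^+(G)$, deletes a maximum-degree vertex $u$, and shows that every other vertex has degree at most $k+1$ in $G-u$ \emph{because} a high-degree vertex would share too many neighbours with $u$. Your $\Delta=n-1$ branch is fine and essentially reproduces the paper's $\Delta\ge n-2$ argument. But the goal you set yourself for $\Delta=n-3$, namely $\lambda^+(G)\ge n-4$, is simply false: $\lambda^+(G)$ can be $1$ or $2$ (trees, cacti) while $\Delta=n-3$ and $n$ is large, so any branch of your argument that tries to certify $\lambda^+(G)\ge n-4$ must fail, and the burden falls entirely on your fallback branches. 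The same structural problem appears in your $\Delta=n-2$ branch, where ``a weaker bound on $\lambda^+(G)$ suffices'' when $\textnormal{rd}(G)\le n-3$ is unjustified --- there is no characterization of graphs with $\textnormal{rd}=n-3$, so you cannot convert an upper bound on $\textnormal{rd}(G)$ into a lower bound on $\lambda^+(G)$; that implication is the theorem itself.

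The concrete configuration your proposal cannot handle is the one the paper spends most of its proof on: a neighbour $a$ of $u$ that is adjacent to \emph{both} non-neighbours $p,q$ of $u$ and satisfies $d_{G-u}(a)=\lambda^+(G)+1=\Delta(G-u)$. (Example: $n=7$, $u\sim a,b,c,d$ and $a\sim b,p,q$; then $\Delta=4=n-3$, $\lambda^+(G)=\lambda(u,a)=2$, and $d_{G-u}(a)=3$.) Here your path count overshoots: $a$ has only $d(a)-3=n-6$ common neighbours with $u$ inside $N(u)$ (not $n-5$), the length-$3$ detours through $p,q$ need not exist, and indeed $\lambda(u,a)$ can equal $n-5$. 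Lemma \ref{Class}(i) applied to $G-u$ gives only $\textnormal{rd}(G)\le\Delta(G-u)+1=\lambda^+(G)+2$, one too many, and Lemma \ref{Class}(ii) is inapplicable because the vertex $a\in N_G(u)$ attains $\Delta(G-u)$ --- so your ``the maximum-degree vertices of $H$ lie in $\{z_1,z_2\}$'' premise is false exactly when it is needed. The paper closes this case with an idea absent from your proposal: it exhibits an explicit $u$-$a$-edge-cut $E(u,S_2\cup T)\cup E(S_1,a)$ of size $k$, splits $G$ along it into two pieces each satisfying the hypotheses of Lemma \ref{Class} with colour budget $k+1$, recolours so the two colourings agree and are rainbow on the cut, and glues. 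Without some such decomposition-and-gluing step (or another new idea), the $\Delta=n-3$ case does not go through.
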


\begin{proof}
Let $d(u)=\Delta(G)$ and $G'=G-u$. Suppose $\lambda^+(G)=k$.
If $\Delta(G)\geq n-2$, we have $\Delta(G')\leq k$;
otherwise, let $v$ be a vertex with $d_{G'}(v)\geq k+1$. Then we have $\lambda^+(u,v)\geq k+1$, a contradiction.
Thus, rd$(G)\leq \Delta(G')+1\leq k+1$ by Lemma \ref{Class}.

If $\Delta(G)=n-3$, let $d(u)=n-3$ and let
$p,q$ be two vertices which are not adjacent to $u$
(i.e. $V(G)=N[u]\cup\{p,q\}$).
Note that $d_G(x)\leq k+2$ for $x\in N(u)$ and $d_G(p), d_G(q)\leq k+1$ since $\lambda^+(G)=k$. Thus, $\Delta(G')\leq k+1$. We distinguish the following cases to discuss.

\textbf{Case 1}. $\Delta(G')\leq k$.

It follows from Lemma \ref{Class} that rd$(G)\leq \Delta(G')+1=k+1$.

\textbf{Case 2}. $\Delta(G')=k+1$.

Let $D=\{x|d_{G'}(x)=k+1\}$.
If $D\subseteq \{p,q\}$, then $G'_\Delta$ is $K_1$ (otherwise, $\lambda(p,q)=k+1$, a contradiction).
Thus, it follows from Lemma \ref{rd-classs}
that $G'$ is in Class 1.
Moreover, $d_{G'}(x)\leq \Delta(G')-1$ for every $x\in N_G(u)$.
So, rd$(G)\leq \Delta(G')=k+1$ by Lemma \ref{Class}.

Suppose $D\cap N(u)\neq \phi$. We claim that $|D\cap N(u)|=1$.
Assume that there are at least two vertices in $D\cap N(u)$, say $x_1,x_2$. Note that $d_{N(u)}(x_1)=d_{N(u)}(x_2)\leq k-1$. So, $\{p,q\} \subseteq N(x_i)$ for each $i\in [2]$. Then we have $\lambda^+_G(x_1,x_2)\geq k+1$, a contradiction.
Let $D\cap N(u)=\{a\}$. Then $\{p,q\}\subseteq N(a)$. Let
$R=N(u)\setminus N[a]$, $T=N(p)\cup N(q)$.
Note that any vertex of $R$ is not adjacent to $T\cup\{p,q\}$.
Assume that there exists a vertex of $R$ which is adjacent to a vertex of $T\cup\{p,q\}$. Then we have $\lambda^+(u,a)\geq k+1$, a contradiction.
Thus, $T\subseteq N[a]$. Let $S=N[a]\setminus T$.
If there exists a vertex $s\in S$ such that
$s$ belongs to a component with a vertex
of $R$ in $G[R\cup S]$, then let $s\in S_1$ and $S_2=S\setminus S_1$.
Observe that the edge-set $E(u,S_2\cup T)\cup E(S_1,a)$
is a $u$-$a$-edge-cut by the definitions of $R$, $S_1$ and $S_2$.
Let $G_1=G[R\cup S_1\cup \{u,a\}]-ua$ and $G_2=G[T\cup S_2\cup \{u,p,q\}]$. Write $G'_1=G_1-u$ and $G'_2=G_2-a$. Then we have  $\Delta(G'_1),\Delta(G'_2)\leq k$.
By Lemma \ref{Class} and Remark 1, there exists a rainbow disconnection coloring $c_i$ of $G_i$ $(i\in[2])$ using colors from $[k+1]$ , moreover, vertex $x$ is proper for each $x \in V(G_1)\setminus \{u\}$ ($x \in V(G_2)\setminus \{a\}$) in coloring $c_1$ of $G_1$ ($c_2$ of $G_2$).
Since $|E(u,S_2\cup T)\cup E(S_1,a)|=k$,
we can adjust colors of $E(S_1,a)$
such that $E(u,S_2\cup T)\cup E(S_1,a)$
have distinct colors. Then we get a coloring
$c$ of $G$ by identify the graph $G_1$ and
$G_2$ using colors from $[k+1]$.

Furthermore, we can verify that $c$ is a
rainbow disconnection coloring of $G$.
For any two vertices $w,z$ of $G$,
if there exists a vertex not in $\{u,a\}$, say $w$, then $E_w$ is a $w$-$z$-rainbow-cut;
if $\{w,z\}=\{u,a\}$, then $E(u,S_2\cup T)\cup E(S_1,a)$
is a $u$-$a$-rainbow-cut. Hence, rd$(G)\leq k+1$.
\end{proof}

By Theorems \ref{3} and \ref{n-3},
we get the following result for graphs of small orders.
\begin{cor}
Let $G$ be a graph of order $n\leq 7$. Then rd$(G)\leq \lambda^+(G)+1$.
\end{cor}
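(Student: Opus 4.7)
The plan is a direct case analysis on the maximum degree $\Delta(G)$, leveraging the two theorems already established for the extreme regimes. Since Theorem \ref{3} handles graphs with $\Delta(G)\leq 3$ and Theorem \ref{n-3} handles graphs with $\Delta(G)\geq n-3$, the corollary follows if the two ranges together cover all possibilities whenever $n\leq 7$.

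First I would check the arithmetic: for $n\leq 7$, the value $n-3$ is at most $4$. Thus, for any graph $G$ of order $n\leq 7$, if $\Delta(G)\leq 3$ then the inequality $\textnormal{rd}(G)\leq \lambda^+(G)+1$ is immediate by Theorem \ref{3}; and if $\Delta(G)\geq 4$ then $\Delta(G)\geq 4\geq n-3$, so Theorem \ref{n-3} applies and yields the same conclusion. Since every graph $G$ falls into one of these two regimes, the corollary is established.

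There is essentially no main obstacle here, because the two theorems were designed precisely to combine in this way; the only thing to verify is the trivial numerical fact $n-3\leq 4$ for $n\leq 7$, which ensures the two hypotheses $\Delta(G)\leq 3$ and $\Delta(G)\geq n-3$ leave no gap. One small caveat I would double-check is that the theorems apply to all \emph{connected} graphs in the stated range (including trivial cases like $\Delta(G)=0$ or disconnected graphs, but since rd$(G)$ is defined only for connected graphs, we may assume connectivity throughout). With that in mind, the proof reduces to two lines invoking Theorems \ref{3} and \ref{n-3}, and no new ideas are required.
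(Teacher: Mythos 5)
Your proof is correct and is exactly the argument the paper intends: the corollary is stated as an immediate consequence of Theorems \ref{3} and \ref{n-3}, and your observation that $n\leq 7$ forces $n-3\leq 4$, so that $\Delta(G)\leq 3$ and $\Delta(G)\geq n-3$ together cover all cases, is precisely the (unwritten) verification the paper relies on.
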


We recall some notions of graphs from \cite{H}. A simple graph $G$ is \emph{overfull} if $|E(G)| > \lfloor \frac{n}{2}\rfloor \Delta(G)$.
A graph $G$ is \emph{subgraph-overfull} if it has an overfull subgraph $H$ with $\Delta(H)=\Delta(G)$. Obviously, every overfull graph is subgraph-overfull. For dense graphs we have the following result.

\begin{thm}
Let $G$ be a subgraph-overfull graph with order $n$ and upper edge-connectivity $\lambda^+(G)$. Then $\textnormal{rd}(G)\leq \lambda^+(G)+1$.
\end{thm}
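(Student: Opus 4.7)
The plan is to show that subgraph-overfullness forces $\lambda^+(G) = \Delta(G)$, after which Lemma \ref{D} immediately delivers $\textnormal{rd}(G) \leq \lambda^+(G) + 1$.

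First I would fix an overfull subgraph $H \subseteq G$ with $\Delta(H) = \Delta(G)$, guaranteed by hypothesis, and set $n_H = |V(H)|$. Overfullness gives $|E(H)| > \lfloor n_H/2 \rfloor \Delta(H) \geq \frac{(n_H - 1)\Delta(H)}{2}$. A quick check shows overfullness forces $\Delta(H) \geq 2$ and therefore $n_H \geq \Delta(H) + 1 \geq 3$, so Matula's bound (Lemma \ref{rd-lem2}) applies with parameter $k = \Delta(H) - 1$. Dropping the nonnegative term $\sigma_k(H)$, the edge threshold required by Matula is at most $\frac{\Delta(H)(n_H - 1)}{2}$, which is exactly the quantity just bounded, so Matula yields $\lambda^+(H) \geq k + 1 = \Delta(H) = \Delta(G)$.

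Next, since $H$ is a subgraph of $G$, any collection of pairwise edge-disjoint $u$-$v$ paths in $H$ is also one in $G$, hence $\lambda^+(G) \geq \lambda^+(H) \geq \Delta(G)$. Combined with the trivial inequality $\lambda^+(G) \leq \Delta(G)$ (since $\lambda_G(u,v) \leq \min\{d(u), d(v)\}$ for every pair), this forces $\lambda^+(G) = \Delta(G)$, and an appeal to Lemma \ref{D} finishes the proof.

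The only real delicacy is verifying that overfullness drives Matula's hypothesis without any help from the $\sigma_k$ term. This amounts to the elementary comparison $\lfloor n_H/2 \rfloor \geq (n_H - 1)/2$, so no genuine obstacle appears; the argument is a clean pipeline from the overfullness count through Matula's inequality into the $\lambda^+ = \Delta$ regime already handled by Lemma \ref{D}.
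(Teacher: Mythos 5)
Your proposal is correct and follows essentially the same route as the paper: bound $|E(H)|$ below by $\frac{(n_H-1)\Delta(H)}{2}$ via overfullness, invoke Lemma \ref{rd-lem2} to get $\lambda^+(H)\geq\Delta(H)$, conclude $\lambda^+(G)=\Delta(G)$, and finish with Lemma \ref{D}. Your write-up is in fact slightly more careful than the paper's, since you make the choice $k=\Delta(H)-1$ explicit and verify the hypothesis $n_H\geq k+2\geq 3$.
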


\begin{proof}
Let $H$ be an overfull subgraph of $G$ with $\Delta(H)=\Delta(G)$.
Then, $|E(H)|>\lfloor \frac{|V(H)|}{2}\rfloor \Delta(H)\geq \frac{|V(H)|-1}{2} \Delta(H)$. Thus, we have that $\Delta(G)\geq \lambda^+(G)\geq \lambda^+(H)\geq \Delta(H)=\Delta(G)$ by Lemma \ref{rd-lem2}. So, $\lambda^+(G)=\Delta(G)$.
Hence, we have rd$(G)\leq \lambda^+(G)+1$ by Lemma \ref{D}.
\end{proof}
For a $k$-regular graph $G$, it follows from Lemma \ref{rd-regu} that the conjecture is true since $\lambda^+(G)=k$. However, we want further to know the $k$-regular graphs with rd$(G)=k$. In \cite{BCHL}, we presented some results on this kind of graphs. We now deduce the following result for $k$-edge-connected $k$-regular graphs with $k$ being odd.

\begin{thm}
Let $k$ be an odd integer, and $G$ a $k$-edge-connected $k$-regular graph of order $n$. Then $\chi'(G)=k$ if and only if $\textnormal{rd}(G)=k$.
\end{thm}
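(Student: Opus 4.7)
The forward direction is immediate from Lemma \ref{rd-lem1}: if $\chi'(G)=k$ then $k=\lambda(G)\leq \lambda^+(G)\leq \textnormal{rd}(G)\leq \chi'(G)=k$, hence $\textnormal{rd}(G)=k$. For the reverse direction, my plan is to fix any rainbow disconnection coloring $c$ of $G$ using exactly $k$ colors and show that $c$ must be a proper edge-coloring of $G$; combined with $\chi'(G)\geq \Delta(G)=k$ this will yield $\chi'(G)=k$. A useful preliminary observation is that, since $k$ is odd and $G$ is $k$-regular, $n=|V(G)|$ must be even, so $|V(G-v)|=n-1$ is odd for every vertex $v$.

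The core of the argument is a counting/parity step. I would assume for contradiction that $c$ is not proper, so some vertex $v$ has at least two edges of a common color $i$; then the $k$ edges at $v$ use at most $k-1$ distinct colors and some color $j\in [k]$ is absent from $E_v$. Assuming for the moment (the delicate point, addressed below) that $c$ restricts to a proper $k$-edge-coloring of $H=G-v$, each color class of $c|_H$ is a matching in a graph of odd order $n-1$ and so contains at most $(n-2)/2$ edges; summing over the $k$ classes accounts for exactly $|E(H)|=k(n-2)/2$ edges, forcing each class to be a maximum matching missing a unique vertex $x_\alpha$. If $x_\alpha\notin N(v)$ then $x_\alpha$'s $k$ edges all lie in $H$ and carry $k$ distinct colors under $c|_H$, contradicting the omission of color $\alpha$. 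Hence $x_\alpha\in N(v)$, and because $x_\alpha$'s $k-1$ edges in $H$ exhaust the $k-1$ colors $[k]\setminus\{\alpha\}$, the removed edge $vx_\alpha$ must satisfy $c(vx_\alpha)=\alpha$. As $\alpha$ ranges over $[k]$ the $x_\alpha$ are pairwise distinct, so $\alpha\mapsto x_\alpha$ is a bijection $[k]\to N(v)$; this forces the $k$ edges at $v$ to carry all $k$ distinct colors, contradicting the choice of $v$.

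The main obstacle is justifying that $c|_H$ is a proper edge-coloring, which amounts to requiring that no vertex $u\neq v$ has a color repetition among its edges in $H$. My plan for closing this gap is to select the rd-coloring $c$, among all $k$-color rd-colorings, so as to minimize the number of non-proper vertices, and then to reduce to the case in which $v$ is the unique vertex with a color repetition via a local Kempe-chain-style recoloring between the repeated color $i$ and the missing color $j$ at a non-proper vertex (the constructions underlying Lemma \ref{Class}(ii) and Remark~1, which produce rd-colorings with at most one non-proper vertex when the extension data is compatible, provide the template for preserving the rainbow-disconnection property under such a swap). Verifying that this local swap keeps the rd-property is the most technical step, but once the reduction to a single non-proper vertex is in hand the parity/counting argument above closes the proof.
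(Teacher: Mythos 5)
Your forward direction matches the paper's and is fine, and your terminal counting argument (each color class of a proper $k$-edge-coloring of $H=G-v$ is a matching in a graph of odd order $n-1$, forcing every class to be a near-perfect matching and the map $\alpha\mapsto x_\alpha$ to be a bijection onto $N(v)$) is a correct and rather clean alternative to the paper's degree-parity computation in the subgraphs $F_{i,A_j}$; both exploit the oddness of $k$ in the same way. The problem is the step you yourself flag: reducing to a $k$-color rd-coloring with at most one non-proper vertex. This is not a deferrable technicality --- it is the entire content of the reverse direction --- and your proposed mechanism (minimize the number of non-proper vertices, then perform a Kempe-style $(i,j)$-swap at a non-proper vertex) does not obviously work. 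In a non-proper edge-coloring the union of two color classes need not decompose into paths and cycles, so the ``chain'' to be swapped is not well defined in the usual sense; a swap can create new repetitions at other vertices; and, most seriously, you give no argument that the rainbow-disconnection property survives the recoloring, which is a global condition on all pairs of vertices and exactly the kind of thing a local swap can destroy. Lemma \ref{Class}(ii) and Remark 1 construct a coloring of $G$ from scratch out of a proper coloring of $G-u$; they provide no tool for modifying a given rd-coloring while preserving rd.

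The paper closes this gap by a structural route that your proposal is missing. If two vertices $p$ and $q$ are both non-proper, the $p$-$q$-rainbow-cut $Z$ guaranteed by the rd-coloring cannot be $E_p$ or $E_q$; since $|Z|\le k$ (only $k$ colors are available) and $G$ is $k$-regular and $k$-edge-connected, any edge-cut of size at most $k$ isolating a single vertex $w$ must equal $E_w$, so $Z$ must separate $G$ into two non-trivial components. The paper then shrinks each side, verifies (its Claim 1) that the restricted coloring remains an rd-coloring of each resulting $k$-edge-connected $k$-regular piece, and iterates until every piece has at most one non-proper vertex; the parity argument is then applied piecewise, and properness at each original vertex transfers back to $G$ because shrinking does not change the colors on $E_x$. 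Without this decomposition (or an actual proof that your swap preserves rainbow disconnection), your argument establishes the conclusion only for rd-colorings that already have at most one non-proper vertex, so the proof as written is incomplete.
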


\begin{proof}
Suppose, first, that $\chi'(G)=k$. By Lemma \ref{rd-lem1}, we have that $k=\lambda(G)\leq \textnormal{rd}(G)\leq \chi'(G)=k$. Thus, $\textnormal{rd}(G)=k$.

Conversely, suppose that $\textnormal{rd}(G)=k$ and
let $c$ be an rd-coloring of $G$.
If $G$ has a $k$-rainbow-cut $T$ such that
$G\setminus T$ has two non-trivial components,
say $G_1$, $G_2$, then we do an operation $f$,
i.e., the graph $G$ shrinks $V(G_1)$, $V(G_2)$,
respectively, to vertices $x_1, x_2$.
The resulting edge-colored graphs are denoted
by $G/V(G_1)$, $G/V(G_2)$, respectively.
Furthermore, the obtained edge-colored graphs
$G/V(G_1)$ and $G/V(G_2)$ are both $k$-edge-connected
$k$-regular. Assume, without loss of generality,
that there exists a $u$-$v$-edge-cut $V$ in $G/V(G_1)$,
where $u,v \in G/V(G_1)$ and $|V|<k$.
By Lemma \ref{rd-shrink},
we know that $V$ is also a $u$-$v$-edge-cut in $G$, a contradiction.

\textbf{Claim 1.} The coloring $c$ of $G$ restrict to $G/V(G_1)$ is an rd-coloring of $G/V(G_1)$.

\noindent\textbf{Proof of Claim 1:}
Note that $V(G/V(G_1))=V(G_2)\cup \{x_1\}$.
Let $u,v$ be two vertices of $G/V(G_1)$.
Suppose $u,v \in V(G_2)$. Let $W$ be a minimum $u$-$v$-rainbow-cut in $G$ and let $W_{H}$ be the set of edges in $W\cap H$.
Since $G_1$, $G_2$ are both $\lceil\frac{k}{2}\rceil$-connected,
we have $|W_{G_2}|\geq \lceil\frac{k}{2}\rceil$.
If the remaining edges of $W$ are all in $G_1$,
then there still is a $u$-$v$-path in $G\setminus W$ since $G_1$ is $\lceil\frac{k}{2}\rceil$-connected and
$|W_{G_1}|\leq \lfloor\frac{k}{2}\rfloor< \lceil\frac{k}{2}\rceil$ for $k$ odd, a contradiction.
If $G_1$ and $T$ both have edges in $W$, without loss of generality,
suppose that $|W_{G_1}|=s$, $|W_{T}|=t$ and $|W_{G_2}|=r$,
where $0<t,s <\lfloor\frac{k}{2}\rfloor$,
$s+t \leq \lfloor\frac{k}{2}\rfloor$ and $r+s+t=k$.
When we remove the set $W$ from $G$, at most $t$ $u$-$v$-paths that go through $T$ are destroyed.
However, there are $s+t$ $u$-$v$-paths going through $T$ in $G$, and
so at least one $u$-$v$-path goes through $T\setminus W$ since $s\geq 1$. Moreover, $G_1\setminus W$ is connected since $|W_{G_1}|< \lceil\frac{k}{2}\rceil$ and $G_1$ is $\lceil\frac{k}{2}\rceil$-connected.
So, there is at least one $u$-$v$-path in $G\setminus W$, a contradiction. Hence, $W\subseteq G_2\cup T$.
Then $W$ is a $u$-$v$-rainbow-cut of $G/V(G_1)$
(otherwise, if $G/V(G_1)$ has a $u$-$v$-path
avoiding the set $W$, then there exists a $u$-$v$-path in $G\setminus W$, a contradiction). If one of $u,v$ is $x_1$, say $u=x_1$,
then $E_{x_1}$ is a $u$-$v$-rainbow-cut of $G/V(G_1)$.
\qed

Repeating the operation $f$ until the obtained edge-colored graphs do not satisfy the condition of operation $f$, the resulting edge-colored $k$-edge-connected $k$-regular graphs are denoted by $\mathscr{F}=\{F_i|i\in [\ell]\}$.

\textbf{Claim 2.} The coloring of the graph $F_i$ in $\mathscr{F}$ is a proper coloring of $F_i$ for each $i\in[\ell]$.

\noindent\textbf{Proof of Claim 2:}
Assume that there exists a graph $F_i$
for some $i\in [\ell]$ for which the coloring is not proper.
If $F_i$ has two vertices, say $p,q$, which are not proper,
then there exists a $p$-$q$-rainbow-cut $Z$ in $F_i$
that are not $E_p$ or $E_q$.
Thus, we get that $Z$ is a rainbow-cut in $F_i$ such
that $F_i\setminus Z$ has two non-trivial components,
a contradiction with the operation $f$.
Hence, $F_i$ has at most one vertex, say $b_i$, which
is not proper for each $i\in[\ell]$.
Given an $i\in[\ell]$, let $k_t$ $(t\in[k])$ be the number
of edges incident with vertex $b_i$ and with color $t$ in $F_i$,
and moreover,
let $F_{i,A_{j}}$ be an induced subgraph of
$F_i$ by the set of edges with colors in $A_{j}$,
where $A_{j}$ is the color set $[k]\setminus\{j\}$ for $j\in [k]$.
Then for the graph $F_i$ ($i\in [\ell]$), $(k-1)(|F_i|-1)+\sum_{t\in A_{j}}{k_t}\equiv 0\pmod 2$ since the sum of degrees of vertices in $F_{i,A_{j}}$ is even for each $j \in [k]$. Furthermore, we have that
$\sum_{t\in A_{1}}{k_t}\equiv \sum_{t\in A_{2}}{k_t}\equiv \cdots \equiv\sum_{t\in A_{k}}{k_t}\equiv0\pmod 2$,
namely, $k_1\equiv k_2 \equiv\cdots \equiv k_k \equiv0\pmod 2$.
Combined with $\sum_{i=1}^k k_i=k$, we obtain that $k_1=k_2=\cdots=k_k=1$. So, the vertex $b_i$ is also proper in $F_i$ for each $i\in [\ell]$.
\qed

For each vertex $x$ of $G$, the colors of edges incident with vertex $x$ are not change in each operation $f$. Thus, the rd-coloring $c$ of $G$ is a proper coloring of $G$, i.e., rd$(G)\geq \chi'(G)$. Hence, $\chi'(G)=k$.
\end{proof}

\section{Relationship of rd$(G)$ and rvd$(L(G))$}

The \emph{line graph} $L(G)$ of a graph $G$ has the edges of $G$ as its vertices, and two distinct edges of $G$ are adjacent in $L(G)$ if and only if they share a common vertex in $G$.
Now, we study the relationship between rd$(G)$ and rvd$(L(G))$.

\begin{lem}\upshape{\cite{BCLLW}}\label{rvdcomplete}
For an integer $n\geq 2$, $$\textnormal{rvd}(K_{n})=\left\{
\begin{array}{lcl}
n-1,       &      & {if~n=2,3},\\
n,         &      & {if~n\geq 4}.
\end{array} \right .$$
\end{lem}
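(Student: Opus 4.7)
The plan is to handle $n\in\{2,3\}$ and $n\geq 4$ in parallel, proving matching upper and lower bounds in each regime.

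The first step is a structural observation about vertex-cuts in $K_n$. Any two vertices $x,y$ of $K_n$ are adjacent, and in $(K_n-xy)-S$ every vertex $z\notin S\cup\{x,y\}$ still gives a path $x$-$z$-$y$; hence the only $x$-$y$-vertex-cut in $K_n$ is $S=V(K_n)\setminus\{x,y\}$. By the definition of an $x$-$y$-rainbow-vertex-cut for adjacent $x,y$, a vertex-coloring $c$ of $K_n$ is an rvd-coloring if and only if for every pair $\{x,y\}$ at least one of the $(n-1)$-sets $V(K_n)\setminus\{x\}$ or $V(K_n)\setminus\{y\}$ is rainbow. This reduces the entire problem to a question about color multiplicities on $(n-1)$-subsets.

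For the upper bound when $n\geq 4$, I would assign the $n$ vertices pairwise distinct colors; then every $V(K_n)\setminus\{x\}$ is rainbow, so $\textnormal{rvd}(K_n)\leq n$. For $n\in\{2,3\}$ the bound $\textnormal{rvd}(K_n)\leq n-1$ follows by exhibiting a coloring in which exactly one color is repeated on a fixed pair $\{u_0,v_0\}$ and all remaining vertices receive distinct colors; a short case check shows that for every pair $\{x,y\}$, at least one of $V\setminus\{x\}$ or $V\setminus\{y\}$ avoids at least one of $u_0,v_0$ and is therefore rainbow.

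For the lower bound when $n\geq 4$, I would argue by contradiction: suppose some rvd-coloring uses at most $n-1$ colors. By pigeonhole, two distinct vertices $u,v$ share a color. Since $n\geq 4$, pick distinct $x,y\in V(K_n)\setminus\{u,v\}$; then $\{u,v\}\subseteq V\setminus\{x\}$ and $\{u,v\}\subseteq V\setminus\{y\}$, so neither $(n-1)$-set is rainbow, contradicting the structural observation for the pair $\{x,y\}$. Hence $\textnormal{rvd}(K_n)\geq n$. For $n=2,3$ the bound $\textnormal{rvd}(K_n)\geq n-1$ is an easier pigeonhole argument along the same lines, using that any smaller color palette forces a repetition that cannot be absorbed by a single vertex deletion.

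The argument is short and the only real subtlety is conceptual bookkeeping: one must carefully unpack the adjacent case of the rainbow-vertex-cut definition and recognize that in $K_n$ the vertex-cut is forced to be $V\setminus\{x,y\}$, so that the whole problem collapses to controlling how many color classes of size $\geq 2$ a coloring can tolerate. Once this is in place, the pigeonhole step and the explicit colorings are essentially immediate.
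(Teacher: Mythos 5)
Your argument is correct. Note, however, that this paper does not prove the lemma at all: it is imported verbatim from the cited reference on rainbow vertex-disconnection, so there is no in-paper proof to compare against. Your self-contained derivation is sound: the key reduction (in $K_n$ every pair $x,y$ is adjacent, the unique $x$-$y$-vertex-cut is $S=V\setminus\{x,y\}$, and the rainbow condition $S+x$ or $S+y$ becomes ``$V\setminus\{y\}$ or $V\setminus\{x\}$ is rainbow'') is exactly right, the pigeonhole lower bound for $n\geq 4$ works because two same-colored vertices $u,v$ can be avoided by neither $V\setminus\{x\}$ nor $V\setminus\{y\}$ once $x,y$ are chosen outside $\{u,v\}$, and the small cases $n=2,3$ check out with your one-repeated-color construction (for $n\leq 3$ every pair meets $\{u_0,v_0\}$, which is why the construction succeeds there and fails for $n\geq 4$). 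The only place worth tightening is the $n=3$ lower bound, which you leave as ``an easier pigeonhole argument''; it is immediate (with one color every $2$-subset $S+x$ is non-rainbow), but should be stated explicitly in a final write-up.
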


\begin{thm}\label{rd-line}
Let $G$ be a graph and $L(G)$ the line graph of $G$. Then $\textnormal{rd}(G)\leq \textnormal{rvd}(L(G))$.
\end{thm}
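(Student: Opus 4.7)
My plan is to produce a rainbow disconnection coloring of $G$ directly from a rainbow vertex-disconnection coloring of $L(G)$. Let $c$ be an rvd-coloring of $L(G)$ using $k=\textnormal{rvd}(L(G))$ colors, and define an edge-coloring $c'$ of $G$ by $c'(e):=c(e)$, identifying every edge $e\in E(G)$ with its corresponding vertex of $L(G)$. I will show that for every pair of distinct vertices $u,v\in V(G)$ there is a $u$-$v$-rainbow-cut under $c'$, which yields $\textnormal{rd}(G)\le k$.

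Fix $u\ne v$. If $d(u)=1$ (respectively $d(v)=1$), then the unique edge incident with $u$ (respectively $v$) is itself a single-edge rainbow $u$-$v$-cut and we are done, so assume $d(u),d(v)\ge 2$. This lets me choose $e_u=ux\in E_u$ and $e_v=vy\in E_v$ with $x\ne v$ and $y\ne u$. The two vertices $e_u,e_v$ of $L(G)$ are then distinct, and they are adjacent in $L(G)$ if and only if they share an endpoint in $G$, equivalently $x=y$.

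The key translation is elementary: every $u$-$v$-path $P$ in $G$ with edge sequence $g_0,\ldots,g_{\ell-1}$ yields an $e_u$-to-$e_v$ walk in $L(G)$, namely $e_u,g_0,g_1,\ldots,g_{\ell-1},e_v$ (with obvious shortening when $g_0=e_u$ or $g_{\ell-1}=e_v$), since consecutive terms share a common vertex of $G$. In the case $e_u\not\sim e_v$ in $L(G)$, I take an $e_u$-$e_v$-rainbow-vertex-cut $S\subseteq E(G)$ guaranteed by $c$ and claim that $S$ is a rainbow $u$-$v$-edge-cut in $G$: were a $u$-$v$-path to survive in $G-S$, its translation would be a walk from $e_u$ to $e_v$ in $L(G)-S$, contradicting that $S$ separates them in $L(G)$; and $S$ is rainbow under $c'$ because it is rainbow under $c$. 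In the case $e_u\sim e_v$ in $L(G)$ (so $x=y$), I take $S$ for which $e_u,e_v$ are separated in $(L(G)-e_ue_v)-S$ and, without loss of generality, $S\cup\{e_u\}$ is rainbow; I then claim $S\cup\{e_u\}$ is a rainbow $u$-$v$-edge-cut in $G$, by applying the same translation to a hypothetical $u$-$v$-path in $G-(S\cup\{e_u\})$.

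The one subtle point, and the main obstacle, is to guarantee in the adjacent case that the translated walk in $L(G)$ actually lies in $L(G)-e_ue_v$, i.e., never uses the deleted edge $e_ue_v$. This is secured by the choices $x\ne v$ and $y\ne u$ together with the exclusion of $e_u$ from the assumed path $P$: the boundary edges $e_ug_0$ and $g_{\ell-1}e_v$ of the walk cannot equal $e_ue_v$ because $g_0$ is incident with $u$ while $e_v=vx$ is not, and symmetrically $g_{\ell-1}$ is incident with $v$ while $e_u=ux$ is not; and no interior edge $g_ig_{i+1}$ of the walk equals $e_ue_v$ because $g_i\ne e_u$ for every $i$, since $P$ avoids $e_u$. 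Once this is checked, both cases deliver a $u$-$v$-rainbow-cut in $G$ under $c'$, completing the verification and giving $\textnormal{rd}(G)\le \textnormal{rvd}(L(G))$.
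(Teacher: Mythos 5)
Your proof is correct and follows essentially the same route as the paper's: transfer the rvd-coloring of $L(G)$ to an edge-coloring of $G$, pick edges $e_u=ux$ and $e_v=vy$ avoiding the opposite endpoint, and translate a surviving $u$-$v$-path into an $e_u$-$e_v$-walk in the line graph to derive a contradiction. In fact your handling of the case $x=y$ (where $e_u$ and $e_v$ are adjacent in $L(G)$, so the rainbow-vertex-cut $S$ only separates them in $(L(G)-e_ue_v)-S$ and one must enlarge the edge-cut to $S\cup\{e_u\}$ to kill the path $u$-$x$-$v$) is more careful than the paper's proof, which does not address this case explicitly.
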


\begin{proof}
Let $c_0$ be an rvd-coloring of the line graph $L(G)$.
Then we get an edge-coloring $c$ of $G$ since the edge-colorings of $G$ are one-to-one correspondence with the vertex-colorings of $L(G)$.
We can verify that $c$ is a rainbow disconnection coloring of $G$.
For any two vertices $u,v$ of $G$, if $uv$ is not a pendent edge, we can find two edges $e_1, e_2$ incident with vertices $u,v$, respectively, and the edge $e_1$ (or $e_2$) does not have two ends as $u,v$. Suppose that $e_1=ux$ and $e_2=vy$, where $x,y\in V(G)\setminus\{u,v\}$ and $x,y$ could be the same vertex.
We know that $e_1,e_2$ correspond to two vertices of $L(G)$, denoted by $a$ and $b$.
We claim that the edge-set $S$ of $G$ which
corresponds to an $a$-$b$-rainbow-vertex-cut $S'$ in $L(G)$ is a $u$-$v$-rainbow-cut in $G$.
Assume that there still exists a $u$-$v$-path $P$ in $G$ which avoids the edge-set $S$ of $G$. Then the $u$-$v$-path $P$ in $G$ corresponds to an $a$-$b$-path $P'$ which avoids the vertex-set $S'$ in $L(G)$. A contradiction.
If $uv$ is a pendent edge of $G$, then $uv$ is a $u$-$v$-rainbow-cut in $G$.
\end{proof}

It is easy to know that the chromatic index of $G$ is equal to the chromatic number of $L(G)$. However, we can only have rd$(G)\leq \textnormal{rvd}(L(G))$ from Theorem \ref{rd-line}. The equality
is not always true. For the moment we have the following necessary
condition for the equality.

\begin{thm}
Let $G$ be a graph with $\delta(G)\geq 4$ and $L(G)$ the line graph of $G$. If $\textnormal{rd}(G)=\textnormal{rvd}(L(G))$, then $\textnormal{rd}(G)=\chi'(G)$.
\end{thm}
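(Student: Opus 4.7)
The plan is to show that, under the hypothesis $\textnormal{rd}(G)=\textnormal{rvd}(L(G))$, any rvd-coloring of $L(G)$ using $\textnormal{rvd}(L(G))$ colors corresponds, via the natural identification of vertices of $L(G)$ with edges of $G$, to a \emph{proper} edge-coloring of $G$. Once that is established, the induced coloring is a proper edge-coloring of $G$ with $\textnormal{rd}(G)$ colors, so $\chi'(G)\le \textnormal{rd}(G)$; combined with $\textnormal{rd}(G)\le \chi'(G)$ from Lemma~\ref{rd-lem1}, we get $\textnormal{rd}(G)=\chi'(G)$.

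Fix an rvd-coloring $c_0$ of $L(G)$ realizing $\textnormal{rvd}(L(G))$ colors and let $c$ be the corresponding edge-coloring of $G$. By the argument in the proof of Theorem~\ref{rd-line}, $c$ is already a rainbow disconnection coloring; what remains is to verify that $c$ is proper. To do so I would first prove the following clique lemma: \emph{In any rvd-coloring of a graph $H$, every clique $K\subseteq V(H)$ with $|K|\ge 4$ is rainbow.} Indeed, for any two (adjacent) vertices $x,y\in K$, each $z\in K\setminus\{x,y\}$ gives a path $xzy$ in $H-xy$, which forces $z$ into every $x$-$y$-vertex-cut; hence if $S$ is an $x$-$y$-rainbow-vertex-cut, then $K\setminus\{x,y\}\subseteq S$. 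Since $S\cup\{x\}$ or $S\cup\{y\}$ is rainbow, in either case the vertices of $S$ carry pairwise distinct colors, and in particular so do the $|K|-2\ge 2$ vertices of $K\setminus\{x,y\}$. Letting the pair $\{x,y\}$ range over all pairs in $K$ shows that every two vertices of $K$ receive different colors.

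Applying this lemma to $H=L(G)$ concludes the argument: for each $u\in V(G)$, the set of edges of $G$ incident with $u$ forms a clique $K_u$ in $L(G)$ of size $d_G(u)\ge \delta(G)\ge 4$, so under $c_0$ the vertices of $K_u$ carry pairwise distinct colors, i.e.\ under $c$ the edges of $G$ at $u$ are pairwise differently colored. Since this holds for every $u\in V(G)$, $c$ is a proper edge-coloring of $G$, and we are done. The only nontrivial step is the clique lemma, and the hypothesis $\delta(G)\ge 4$ enters precisely so that each $K_u$ is large enough for that lemma to apply; if some $u$ had degree $3$, the two uncommon vertices of $K_u\setminus\{x,y\}$ would collapse to a single vertex and the argument would no longer force distinct colors.
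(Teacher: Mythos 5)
Your proof is correct and follows essentially the same route as the paper: both arguments reduce to the fact that the edges incident with a vertex of degree at least $4$ form a clique in $L(G)$ that must be rainbow under any rvd-coloring, so the induced edge-coloring of $G$ is proper and hence $\chi'(G)\le \textnormal{rd}(G)\le\chi'(G)$. The paper compresses this step into a contradiction with $\textnormal{rvd}(K_n)=n$ for $n\ge 4$ (Lemma \ref{rvdcomplete}), which implicitly requires exactly your clique lemma, since restricting an rvd-coloring of $L(G)$ to a clique is not a priori an rvd-coloring of that clique; your explicit observation that every $x$-$y$-rainbow-vertex-cut must contain $K\setminus\{x,y\}$, and is itself rainbow, supplies the justification the paper leaves unstated.
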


\begin{proof}
By contradiction, assume that $t=\textnormal{rd}(G)=\textnormal{rvd}(L(G))<\chi'(G)$.
Let $c$ be a coloring of $G$ using colors from $[t]$.
Then there exists at least one vertex, say $v$, such
that $E_v$ has at least two edges with the same color.
Since $\delta(G)\geq 4$, $E_v$ in $G$ corresponds to a $K_t$ in $L(G)$,
where $t=|N(v)|\geq 4$.
Note that there are at most $t-1$ colors in $K_t$.
This is a contradiction to Lemma \ref{rvdcomplete}.
\end{proof}

\end{document}